\newcommand{\defstyle}[1]{\textbf{#1}}
\newcommand{\myprob}[1]{\mathbb P \left[ #1 \right]}
\newcommand{\probCond}[2]{\mathbb P \left[ #1 \left| #2 \right. \right]}
\newcommand{\omid}[1]{\mathbb E \left[ #1 \right]}
\newcommand{\bs}[1]{\boldsymbol{#1}}
\newcommand{\card}[1]{\# #1}
\newcommand{\ave}{\mathrm{ave}}
\newcommand{\del}[1]{}
\newcommand{\mar}[1]{}
\newcommand{\unwritten}[1]{}
\newcommand{\restrict}[2]{{
		\left.\kern-\nulldelimiterspace 
		#1 
		\vphantom{\big|} 
		\right|_{#2} 
}}
\theoremstyle{theorem}
\newtheorem{theorem}{Theorem}[section]
\newtheorem{lemma}[theorem]{Lemma}
\newtheorem{proposition}[theorem]{Proposition}
\newtheorem{conjecture}[theorem]{Conjecture}
\theoremstyle{definition}
\newtheorem{definition}[theorem]{Definition}
\newtheorem{example}[theorem]{Example}
\theoremstyle{definition}
\newtheorem{remark}[theorem]{Remark}
\theoremstyle{theorem}
\numberwithin{equation}{section}
\let\orgdescriptionlabel\descriptionlabel
\renewcommand*{\descriptionlabel}[1]{%
	\let\orglabel\label
	\let\label\@gobble
	\phantomsection
	\edef\@currentlabel{#1}%
	\let\label\orglabel
	\orgdescriptionlabel{#1}%
}
\begin{document}

\title{Counter Examples to Invariant Circle Packing}

\author{Ali Khezeli\footnote{Department of Applied Mathematics, Faculty of Mathematical Sciences, Tarbiat Modares University, P.O. Box 4115-134, Tehran, Iran, khezeli@modares.ac.ir}\ \footnote{School of Mathematics, Institute for Research in Fundamental Sciences (IPM), P.O. Box: 19395-5746, Tehran, Iran, alikhezeli@ipm.ir}}

\maketitle

\begin{abstract}
	In this work, a unimodular random planar triangulation is constructed that has no invariant circle packing. This disputes a problem asked in~\cite{BeTi19}. A natural weaker problem is the existence of point-stationary circle packings for a graph, which are circle packings that satisfy a certain mass transport principle. It is shown that the answer to this weaker problem is also false. Two examples are provided with two different approaches: Using indistinguishability and finite approximations.
	
\end{abstract}


\section{Introduction}
A well known theorem of Koebe, Andreev and Thurston~\cite{bookKo36,bookTh79} states that every finite simple planar graph can be represented by a \defstyle{circle packing}; i.e., one can correspond a circle to every vertex such that the circles have disjoint interiors and two circles are tangent if and only if their corresponding vertices are adjacent. In addition, if the graph is a triangulation (i.e., every face has 3 edges), then such a circle packing is unique up to Mobius transformations. Circle packings have attracted a lot of attention, especially because of their connections to conformal maps, hyperbolic manifolds and random walks.

The existence and uniqueness of circle packings have been extended to infinite graphs by He and Schramm~\cite{HeSc93,HeSc95}. In particular, they proved that every infinite planar triangulation with one {end} has a locally finite circle packing in either the plane or the hyperbolic plane (which can be represented by the unit disk), but not both. In the first case, the graph is called \textit{CP-parabolic} and in the second case, it is called \textit{CP-hyperbolic} . They also proved that such a circle packing is unique up to similarities of the plane (resp. isometries of the hyperbolic plane). 
There is also a rich theory that connects the geometry of the circle packing to the behavior of the simple random walk on the graph (see the discussion in~\cite{AnHuNaRa16}), assuming that the degrees of the vertices are bounded.
For instance, \cite{HeSc95} proves that the type of the circle packing is determined by recurrence or transience of the random walk. It should be noted that the assumption of bounded degrees is crucial for having general results. 

Some models of random planar graphs have been of great interest recently; e.g., the UIPT (uniform infinite planar triangulation) \cite{AnSc03}. In these models, since the graphs mostly have unbounded degrees, many of the general results about circle packings cannot be  applied. For instance, the main goal of~\cite{GuNa13} is to prove the recurrence of UIPT.
More general than specific examples, \cite{AnHuNaRa18} proved that many of the general results about circle packings of bounded-degree triangulations can be generalized to all \textit{unimodular} random planar triangulations.
The concept of unimodularity of random (rooted) graphs, introduced in~\cite{processes}, can be thought of begin \textit{statistically homogeneous} and is defined by the \textit{mass transport principle} (see Subsection~\ref{subsec:sCP}). 
This notion is connected to stationary point processes as follows: Roughly speaking, by constructing a graph on a stationary point process without looking at where the origin is (i.e., in a translation-invariant manner), a unimodular graph is obtained. More precisely, one should condition on the event that the origin is included in the point process (this gives the \textit{Palm version} of the point process), and then take the origin as the root of the graph.
More generally, the same holds for \textit{point-stationary} point processes, which are point processes that contain the origin and satisfy a certain mass transport principle.


Conversely, given a unimodular random planar graph, can it always be embedded in the plane (or the hyperbolic plane) such that the distribution of the embedded graph is invariant under all isometries (called an \textit{invariant embedding} in~\cite{BeTi19})? Under the condition of finite expected degree, the answer is yes and is proved in~\cite{BeTi19}. In addition, \cite{BeTi19} asks the following natural question: Does the graph have an \textit{invariant circle packing}? In other words, can one choose a version of the circle packing of the random graph such that the distribution of the circle packing is invariant under isometries (note that the graph does not have a unique circle packing since one can apply an isometry or a similarity)? \cite{BeTi19} shows that the answer is yes for CP-hyperbolic triangulations. One of the main ingredients of the proof is that the radii of the circles in the circle packing are determined by the graph. For CP-parabolic graphs, the radii of the circles are not determined since one can scale the set of circles arbitrarily. This freedom is an obstacle for the arguments to work in the CP-parabolic case, and hence, the question has been open for CP-parabolic graphs. In this paper, we dispute this problem as follows:

\begin{theorem}
	\label{thm:scp}
	There exists a unimodular triangulation with bounded degrees that is CP-parabolic but does not have any invariant circle packing.
\end{theorem}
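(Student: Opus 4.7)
The plan is to exhibit a concrete unimodular random triangulation $T$ with bounded degrees, verify its CP-parabolicity, and then derive a contradiction from the assumption that $T$ admits an invariant circle packing. The general philosophy is to exploit the single real degree of freedom (the global scale) that distinguishes the CP-parabolic case from the CP-hyperbolic case treated in~\cite{BeTi19}: because an invariant circle packing is only required to be invariant under isometries rather than similarities, this scale must be realized as an honest random variable, and I would try to build $T$ so that no such scale can be consistently chosen.

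First, I would look for $T$ among triangulations built from a deterministic combinatorial skeleton together with an auxiliary unimodular randomization that gives control over the mass transport identities. A natural candidate is a bi-infinite stack of triangulated annuli whose combinatorial widths grow in a prescribed way, so that in any circle packing the radii along one combinatorial direction must grow (or decay) geometrically by virtue of the Ring Lemma together with the bounded-degree assumption. To ensure unimodularity I would fold this skeleton by a reflection chosen at random, or equivalently work on the quotient of a two-ended object whose two ends are indistinguishable in the sense of~\cite{processes}. To ensure CP-parabolicity, I would tune the growth so that simple random walk is recurrent, then apply the criterion of~\cite{AnHuNaRa18} to conclude that the packing fills the plane rather than the disk.

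Next, I would assume for contradiction that $T$ admits an invariant circle packing, so that the centres $\{z_v\}$ form a translation-invariant random subset of the plane. The main lever is the mass transport principle for $T$ applied to transports of the form $f(o,v) = g(r_v)\indic{v \sim o}$ or transports that move mass proportional to $|z_v - z_o|$ along edges, combined with the translation invariance of the distribution and the stationarity of the induced point process. The geometric constraint built into $T$, namely that radii grow geometrically in one combinatorial direction, together with the indistinguishability of the two ends, should force contradictory values for the same expectation under the two natural transports corresponding to the two ends. Equivalently, any invariant choice of scale would have to satisfy $\mathbb{E}[r_o^2]/\mathbb{E}[r_o'^2] = c$ for some ratio $c \ne 1$ dictated by the skeleton, while a re-rooting along the transport sends $o$ to $o'$ without changing the joint law.

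The main obstacle is Step~3: turning the qualitative geometric rigidity coming from bounded degrees (the Ring Lemma) into a quantitative identity that is falsified by the mass transport principle. A subsidiary difficulty is ensuring that the randomization used to unimodularize the skeleton does not itself create enough freedom to absorb the required scale; this is where I expect the indistinguishability approach mentioned in the abstract to be crucial, since it rules out using a measurable reference point in $T$ to anchor the scale. If this direct approach stalls, I would fall back on the finite-approximation strategy also alluded to in the abstract: realize $T$ as a Benjamini--Schramm limit of finite triangulations, observe that each finite packing has a canonically normalized scale, and argue that no translation-invariant weak limit of the normalized finite packings can exist because the normalizing scales tend to zero or infinity.
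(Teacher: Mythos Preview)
Your proposal is not a proof but a research plan, and the plan misses the mechanism the paper actually uses for this theorem. You aim to exploit the scale freedom and derive a contradiction from mass-transport identities involving radii; you yourself flag Step~3 as the main obstacle and do not resolve it. That obstacle is real: nothing in your sketch forces an \emph{identity} (as opposed to an inequality) between the two transported quantities, and the ``indistinguishability of the two ends'' you invoke is not established for any specific construction. The fallback via finite approximations is likewise only a heuristic here.

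The paper's proof is far simpler and does not use the scale freedom at all. The key observation (Theorem~\ref{thm:scp2}) is that a unimodular one-ended triangulation with \emph{exactly one nontrivial automorphism} can have no stationary circle packing: by rigidity of the packing (Lemma~\ref{lem:automorphism}), that automorphism must be realized by a unique nontrivial isometry $T$ of the plane permuting the circles; since $T\circ T=\mathrm{id}$, $T$ is a reflection in a point or a line, and this point or line would then be a translation-invariant random object in $\mathbb{R}^2$, which is impossible. The remaining work is to build a bounded-degree CP-parabolic example with exactly one nontrivial automorphism (Example~\ref{ex:sCPbounded}), which is done by an explicit half-plane construction with a built-in reflection symmetry. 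The indistinguishability and finite-approximation techniques you mention do appear in the paper, but only for the strictly harder Theorem~\ref{thm:pscp} (no \emph{point}-stationary CP); for Theorem~\ref{thm:scp} they are unnecessary.
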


A counterexample to prove this theorem will be provided in Section~\ref{sec:scpCounter}. Perhaps surprisingly, the proof does not use the freedom in choosing a scale at all! In fact, it will be shown that choosing a suitable scale, if possible, can only give a \textit{point-stationary} circle packing (Lemma~\ref{lem:equivCP}). To convert it into a stationary circle packing, another condition is required (that a certain Voronoi cell has finite expected area). So it is natural to weaken the problem asked in~\cite{BeTi19} as follows: Does every unimodular planar graph have a point-stationary circle packing? We will show that the answer to this problem is also negative, but with a much harder proof:

\begin{theorem}
	\label{thm:pscp}
	There exists a unimodular triangulation with bounded degrees that is CP-parabolic but does not have any point-stationary circle packing.
\end{theorem}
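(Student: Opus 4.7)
The plan is to construct a bounded-degree, one-ended unimodular random planar triangulation $(T,o)$ that is CP-parabolic and to show, by contradiction, that no measurable choice of scale on its almost-surely-unique-up-to-similarity circle packing gives rise to a point-stationary one. So I assume a point-stationary circle packing $P$ with radii $(r_v)_{v \in V(T)}$ exists, view $(T,o,P)$ as a unimodular random decorated graph, and aim to extract from the hypotheses a scale-invariant quantity for which the mass transport principle produces a numerical contradiction.

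For the construction, I would build $T$ by a recursive hierarchical procedure (or, for the finite-approximation variant, as the Benjamini--Schramm limit of finite triangulations) glueing together copies of a fixed finite combinatorial motif. Two features drive the design. First, the motif must be rigid enough that the Ring Lemma forces a multiplicative gap between the radii of distinguished vertices inside each copy: every circle packing of $T$ will inherit a uniform almost-sure bound of the form $r_v/r_{\phi(v)} \geq c > 1$ along a combinatorially distinguished factor map $\phi : V(T)\to V(T)$. Second, $T$ must be unimodular and CP-parabolic; the latter, for bounded-degree triangulations, amounts to recurrence of the simple random walk, which I would arrange by keeping the recursive branching subexponential, or by recognizing $(T,o)$ as a Benjamini--Schramm limit of explicit finite triangulations.

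The core argument then combines the mass transport principle with this forced local drift. Applied to the symmetric edge transport $F(u,v)=\log(r_v/r_u)\,\mathbf{1}_{v\sim u}$, the principle yields $\mathbb{E}\bigl[\sum_{v\sim o}\log(r_v/r_o)\bigr]=0$ automatically, so the sought contradiction must come from a more refined transport tailored to $\phi$. Using mass transport along $\phi$, with an integrable truncation of $\log(r_v/r_u)$ to accommodate possibly unbounded radius ratios, I expect to derive that the mean logarithmic $\phi$-drift must vanish, in direct conflict with the Ring-Lemma lower bound $\log(r_v/r_{\phi(v)}) \geq \log c > 0$. The indistinguishability-based variant replaces $\phi$ with an invariant partition of vertices defined by local radius comparisons and argues that two classes of positive but unequal asymptotic density are statistically indistinguishable from the root, in contradiction with the forced drift.

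The main obstacle, and the reason Theorem~\ref{thm:pscp} is significantly harder than Theorem~\ref{thm:scp}, is the scale freedom: radii are meaningful only up to a global random multiplicative factor, so every step must be phrased in the scale-invariant language of ratios, and the contradiction must hold uniformly over all measurable scale selections rather than for a single canonical one. Designing a motif whose Ring-Lemma-enforced drift is genuinely robust to rescaling, and whose ratios are sufficiently integrable for mass transport to apply without truncation artifacts, is the delicate technical point, which likely explains why two genuinely different constructions (indistinguishability-based and finite-approximation-based) are pursued in the paper.
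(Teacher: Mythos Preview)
Your proposal is a high-level sketch rather than a proof, and the central step---deriving that the mean logarithmic $\phi$-drift vanishes via the mass transport principle---does not work as stated. For a factor map $\phi$ that is not a bijection (which is the natural situation in any hierarchical construction: a parent map is typically $2$-to-$1$, not $1$-to-$1$), the mass transport principle applied to $g(u,v)=h(r_u,r_v)\,\mathbf{1}_{v=\phi(u)}$ equates $\mathbb{E}[h(r_o,r_{\phi(o)})]$ with $\mathbb{E}\bigl[\sum_{u:\phi(u)=o} h(r_u,r_o)\bigr]$; since the number of $\phi$-preimages of $o$ is not identically one, this does \emph{not} collapse to ``expected drift equals zero.'' If instead $\phi$ is a genuine equivariant bijection with $r_v/r_{\phi(v)}\geq c>1$ in every circle packing, then the contradiction follows immediately (indeed, $r_o$ and $r_{\phi(o)}$ would have the same law while $\log r_o\geq\log r_{\phi(o)}+\log c$ a.s., which is impossible for any real random variable)---but you give no indication of how to build a CP-parabolic unimodular triangulation carrying such a bijection, and it is far from clear that one exists. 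The Ring Lemma gives two-sided bounds on ratios of \emph{adjacent} radii; it does not by itself force a strict one-sided inequality along a prescribed factor map.

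The paper's argument is structurally quite different from what you outline. The example is an explicit graph whose dual (above a line of symmetry) is an eternal Galton--Watson tree; each vertex has a parent $p(v)$ and the geometric constraint (all circles tangent to a common line) forces $r_v<r_{p(v)}$, but $p$ is not a bijection and vertices have either zero or two children at random. The contradiction is obtained not by a drift computation but by showing that the ergodic averages $\mathrm{ave}(\varphi\circ r;L_n)$ of a bounded function of the radii along the level sets $L_n$ are \emph{strictly decreasing} in $n$---the delicate point being an independence argument (the offspring indicators on level $L_0$ are independent of the radii on $L_0$). This strict monotonicity would allow one to \emph{select} a single level $L_N$ in a unimodularity-preserving way, and that is what is shown to be impossible: a separate mass-transport argument (sending unit mass from each vertex to its first ancestor in the selected level) gives infinite expected incoming mass because the descendant tree of the root is a critical Galton--Watson tree with infinite expected size. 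Your ``indistinguishability-based variant'' paragraph gestures at something in this direction, but the mechanism you describe (two classes of unequal density) is not what is used; the actual obstruction is that no generation of the eternal Galton--Watson structure can be equivariantly singled out.
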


This theorem is proved in Section~\ref{sec:pscpCounter} by using the fact that certain sets in the provided example are indistinguishable (Lemma~\ref{lem:foilselection}).
Note that this theorem directly implies Theorem~\ref{thm:scp}, but the latter is proved separately since its proof is much simpler and is the basis of the proof of Theorem~\ref{thm:pscp}.

For a given unimodular planar graph, what criteria ensures the existence of a point-stationary circle packing? In section~\ref{sec:anotherapproach}, we investigate this problem using approximations of the graph (and its circle packing) by finite graphs (such approximations are always possible in theory, as proved in~\cite{AnHuNaRa18}). It will be shown that in a finite approximation of the circle packing, the empirical distribution of the radii gives information on the existence of a point-stationary circle packing. Using this approach, another example is proved for Theorem~\ref{thm:pscp}.

The paper is structures as follows. Section~\ref{sec:definitions} provides basic definitions and defines (point-) stationary circle packings. Sections~\ref{sec:scpCounter} and~\ref{sec:pscpCounter} prove Theorems~\ref{thm:scp} and~\ref{thm:pscp} respectively. The approach by finite approximations is provided in Section~\ref{sec:anotherapproach}.

\section{Definitions}
\label{sec:definitions}

This section provides basic definitions regarding circle packings and (point-) stationary circle packings.

\subsection{Circle Packings}

A \defstyle{circle packing}, abbreviated by CP, is a collection $P=\{C_v\}$ of circles in the plane such that the interiors of the circles are disjoint. The \defstyle{nerve} or \defstyle{tangency graph} of $P$ is the graph with vertex set $P$ obtained by connecting two vertices by an edge if and only if their corresponding circles are tangent. This graph can be embedded in the plane by putting the vertices in the centers of corresponding circles and connecting every pair of adjacent vertices by a straight segment. So the nerve is a planar simple graph.

Conversely, let $G$ be a planar simple graph. A \defstyle{circle packing of $G$} is a map $C$ that assigns to each vertex $v\in V(G)$ a circle $C_v$ in $\mathbb R^2$ such that $P:=\{C_v\}_{v\in V(G)}$ is a CP with nerve $G$. If $G$ is a \textit{plane graph} (i.e., already embedded in the plane), one can require that the natural embedding of the nerve of $P$ has the same combinatorial structure as $G$; i.e., can be obtained from $G$ by a homeomorphism of the plane. A similar property can be required if $G$ is equipped only with a \textit{combinatorial embedding} (in other words, it is a \textit{map}); i.e., equipped with a cyclic order on the set of neighbors of any vertex or equipped with a set of subsets which represent the {faces}.
Note that the mapping from $V(G)$ to the set of circles is distinguished. For instance, there are many CPs of the graph $\mathbb Z^2$ which consist of the circles of radii $\frac 1 2$ at integer points of the plane. 

It is proved that every locally finite simple plane graph $G$ has a CP~\cite{bookKo36,bookTh79,HeSc93,HeSc95}. Such an embedding is not unique if $G$ has faces with more than 3 vertices. However, if $G$ is a triangulation of the plane and has one end (i.e., by removing finitely many vertices, exactly one infinite connected component can appear), then there is a uniqueness result as follows~\cite{HeSc93,HeSc95}. 
Let $P$ be a circle packing of $G$.
The \defstyle{carrier} of $P$ is the union of the circles together with their interiors and the regions which correspond to the faces of $G$ and are bounded by three adjacent circles (note that since $P$ respects the combinatorial structure of the graph, each face corresponds to a bounded region in the plane). Since $P$ has one end, it can be seen that the carrier is a simply connected open subset of $\mathbb R^2$. It is proved that  exactly one of the following two cases happen: (1) The carrier of every circle packing of $G$ is $\mathbb R^2$. $G$ is called \defstyle{CP-parabolic} in this case. (2) There exists a circle packing of $G$ such that its carrier is the unit disk. $G$ is called \defstyle{CP-hyperbolic} in this case.
In this case, by regarding the unit disk as the hyperbolic plane (the Poinc\'are disk model) one obtains a CP of $G$ in the hyperbolic plane. In the first (resp. second) case, the circle packing is unique up to M\"obius transformations that fix the Euclidean (resp. hyperbolic) plane; i.e., up to isometries and similarities of the plane (resp. up to isometries of the hyperbolic plane).

%

In this paper, by a \defstyle{triangulation} we always mean a locally finite simple plane graph that has one end and all faces have three edges. 
The above uniqueness readily implies the following result which is well known.

\begin{lemma}
	\label{lem:automorphism}
	If $G$ is a one-ended triangulation, $\tau$ is an automorphism of $G$ and $C=\{C_v\}_{v\in V(G)}$ is a circle packing of $G$ with carrier $\mathbb R^2$ (resp. the unit disc), then there exists an isometry $T$  of the plane (resp. the hyperbolic plane) that permutes the circles and represents $\tau$; i.e., $\forall v: T(C_v)=C_{\tau(v)}$.
\end{lemma}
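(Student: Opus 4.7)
The plan is to use the uniqueness of the circle packing (recalled immediately before the lemma) to convert the automorphism $\tau$ into a Möbius transformation of the ambient space, and then to verify that in the Euclidean case this Möbius transformation must actually be an isometry.

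First I would form the permuted collection $C':=\{C_{\tau(v)}\}_{v\in V(G)}$. Since $\tau$ is a graph automorphism, two vertices $u,v$ are adjacent in $G$ iff $\tau(u),\tau(v)$ are, so the nerve of $C'$ is again $G$ (with the same face-triples, obtained by relabeling those of $C$ via $\tau$). As a set of circles $C'$ coincides with $C$, hence they have identical carriers, namely $\mathbb{R}^2$ (resp.\ the unit disk). The uniqueness statement now yields a Möbius transformation $T$ of the corresponding plane with $T(C_v)=C_{\tau(v)}$ for every $v$. In the hyperbolic case, $T$ is by definition a hyperbolic isometry and we are done.

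In the parabolic case, $T$ is a Euclidean similarity of the form $z\mapsto \alpha z+\beta$ or $z\mapsto \alpha\bar z+\beta$, and the whole content of the proof is to show that its scaling factor $\lambda:=|\alpha|$ equals $1$. I would argue by contradiction. Replacing $(T,\tau)$ by $(T^{-1},\tau^{-1})$ if necessary, assume $\lambda<1$, so that $T$ has a unique fixed point $p\in\mathbb{R}^2$ toward which every orbit converges. Assuming $\tau\ne \mathrm{id}$ (otherwise take $T=\mathrm{id}$), pick $v$ with $\tau(v)\ne v$ and set $v_n:=\tau^n(v)$. If the $v_n$ are pairwise distinct, then the circles $C_{v_n}=T^n(C_v)$ are pairwise distinct, have radii $\lambda^n r(C_v)\to 0$, and their centers converge to $p$; they therefore accumulate at $p$, contradicting local finiteness of $C$. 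If instead $v_k=v$ for some $k\ge 1$, then $T^k$ fixes the circle $C_v$ while having scale factor $\lambda^k\ne 1$, which is impossible for a Euclidean similarity acting on a circle of positive radius. Hence $\lambda=1$ and $T$ is an isometry.

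The main obstacle is precisely this parabolic step: the scale freedom in the uniqueness statement has to be removed using dynamical information about iterates of $\tau$. Everything else (that $C'$ is a circle packing with the same carrier, and that uniqueness supplies $T$) is essentially bookkeeping; the interesting argument is that a strictly contracting similarity would force the packing to accumulate at its fixed point, which the handling of finite vs.\ infinite orbits of $\tau$ rules out.
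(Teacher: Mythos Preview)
Your proposal is correct and follows essentially the same approach as the paper: form the relabeled packing $C'_v=C_{\tau(v)}$, invoke uniqueness to obtain a similarity $T$, and in the parabolic case argue that a non-isometric similarity would force the packing to accumulate at its fixed point. The paper's proof is terser (it simply asserts that the fixed point would be a limit point of the packing), whereas you have carefully split into the cases of infinite versus finite $\tau$-orbits; this extra care is a genuine improvement in rigor but not a different idea.
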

\begin{proof}
	Note that $C'_v:=C_{\tau(v)}$ is another CP of $G$. By uniqueness, there exists a similarity $T$ of the plane such that $T(C_v)=C'_v=C_{\tau(v)}$. So $T$ permutes the circles. In the CP-hyperbolic case, $T$ is an isometry and the claim is proved. In the other case, assume $T$ is not an isometry, and hence, it has a fixed point. One can deduce that this fixed point is a limit point of the CP, which is a contradiction. So the claim is proved.
\end{proof}

\subsection{Stationary Circle Packings}
\label{subsec:sCP}

An \defstyle{invariant (or stationary) circle packing} in the plane (resp. in the hyperbolic plane) is a random CP in the plane such that its distribution is invariant under all translations of the plane (resp. isometries of the hyperbolic plane). 
In other words, the (random) set of centers of the circles is a \textit{stationary point process}, and moreover, by letting the mark of each center be the corresponding radius, one gets a \textit{stationary marked point process}. 

Consider the nerve of a stationary CP. Not all random planar graphs appear this way. A necessary condition is \textit{unimodularity}, described below, which is a counterpart of stationarity in the context of random graphs. Conversely, whether unimodularity is a sufficient condition or not, has been an open problem~\cite{BeTi19} and it disputed in this paper.

A \defstyle{rooted graph} is a pair $(G,o)$, where $o$ is a distinguished vertex of $G$. We always assume that the graph is connected and locally finite. The symbol $[G,o]$ shows the corresponding equivalence class, where two rooted graphs are equivalent if they are isomorphic. Let $\mathcal G_*$ be the set of these equivalence classes. It is known that $\mathcal G_*$, under the \textit{Benjamini-Schramm topology}, is a Polish space (see e.g., \cite{processes}). A \defstyle{random rooted graph} is a random object in $\mathcal G_*$ and is denoted by bold symbols like $[\bs G, \bs o]$. A \defstyle{unimodular graph}~\cite{processes} is a random rooted graph $[\bs G, \bs o]$ that satisfies the \textit{mass transport principle}: 
\begin{equation}
	\label{eq:mtp}
	\forall g: \omid{\sum_{v\in V(\bs G)} g(\bs G, \bs o, v)} = \omid{\sum_{v\in V(\bs G)} g(\bs G, v, \bs o)},
\end{equation}
where the function $g$ should be nonnegative, invariant under isomorphisms and measurable. One can also allow that every vertex or pair of vertices has some mark, which leads to the definition of \defstyle{unimodular marked graphs} (or networks) similarly. In particular, one can use marks to define \defstyle{unimodular planar graphs} (see Example~9.6 of~\cite{processes}).

Assume that $\bs P$ is a stationary CP. One can choose a \textit{typical} root for the nerve of $\bs P$ formalized as follows: Let $\bs P_0$ be the CP obtained by conditioning $\bs P$ to have a circle centered at the origin (this is called the \textit{Palm version} of $\bs P$ and is defined if the set of centers of the circles has finite \textit{intensity}, see e.g., \cite{ThLa09}). Let $\bs G$ be the nerve of $\bs P_0$ and $\bs o$ be the origin. It can be shown that $[\bs G, \bs o]$ is unimodular. Conversely, if $[\bs G, \bs o]$ is a unimodular planar graph, then an \defstyle{invariant (or stationary) circle packing of $[\bs G, \bs o]$} is a stationary circle packing $\bs P$ such that the nerve of $\bs P_0$ has the same distribution as $[\bs G, \bs o]$.

Let $[\bs G, \bs o]$ be a unimodular planar graph. It is proved in~\cite{BeTi19} that if $\bs G$ is CP-hyperbolic a.s. and $\omid{\mathrm{deg}(\bs o)}<\infty$, then $[\bs G, \bs o]$ has a stationary CP in the hyperbolic plane. \cite{BeTi19} raised the problem that whether every CP-parabolic unimodular planar graph has a stationary CP in $\mathbb R^2$ or not. Here, it will be shown that the answer to this problem is negative (see Remark~\ref{rem:obstacle} for the reasons that the arguments in~\cite{BeTi19} do not work in this case).


%

\begin{remark}
	It is shown in~\cite{AnHuNaRa16} that for unimodular planar graphs, being \textit{invariantly non-amenable} is equivalent to being (with positive probability) CP-hyperbolic (see~\cite{AnHuNaRa16}).
\end{remark}

\begin{remark}
	\label{rem:obstacle}
	Let $[\bs G, \bs o]$ be a unimodular planar graph that is CP-parabolic. There are two reasons that might prevent it from having a stationary CP. Note that in every realization, there exists a CP in $\mathbb R^2$, but it is not unique since one can scale it arbitrarily. This freedom is a disadvantage since, to get a stationary CP, it is necessary to choose one of these infinitely many CPs in every realization of the graph without looking at the root (indeed, in the examples of this  paper, it is not possible to do this in a measurable way). 
	Even finding a suitable scale (in a measurable way) is not enough to obtain a stationary CP (e.g., Example~\ref{ex:canopy}). In fact, it leads to a \textit{point-stationary} CP, describe in the next subsection (by Lemma~\ref{lem:equivCP}). 
	To convert it into a stationary CP, it is required that a certain Voronoi cell has finite expected are. This will be explained in Remark~\ref{rem:voronoi}.
	%
\end{remark}

\subsection{Point-Stationary Circle Packings}
\label{subsec:psCP}
According to Theorem~\ref{thm:scp} and Remark~\ref{rem:obstacle}, it is natural to ask whether every CP-parabolic unimodular planar graph has a \textit{point-stationary CP} or not. In this subsection, point-stationarity is defined. It will be shown in Section~\ref{sec:pscpCounter} that the answer to this question is also negative.


The fundamental equation that relates unimodularity to stationary point processes is the mass transport principle~\eqref{eq:mtp}. A similar formula holds for (Palm versions of) stationary point processes, but it also holds for \textit{point-stationary} point processes~\cite{ThLa09}. A (marked) point process $\Phi$ in the plane is \defstyle{point-stationary} if $0\in \Phi$ a.s. and 
\[
	\forall g: \omid{\sum_{x\in \Phi} g(\Phi,0,x)} = \omid{\sum_{x\in \Phi} g(\Phi,x,0)}.
\] 
In this equation, $g$ is an arbitrary function that assigns a nonnegative number $g(\varphi,x,y)$ to every tuple $(\varphi,x,y)$, where $\varphi$ is a (marked) discrete subset of the plane and $x,y\in \varphi$, such that $g$ is invariant under translations and is measurable (to be suitably defined). In particular, this defines \defstyle{point-stationary circle packings}. Note that in such packings, there is always a circle centered at the origin. 
By the arguments in Subsection~\ref{subsec:sCP}, the Palm version of every stationary circle pacing is point-stationary. It can be seen that the nerve of every point-stationary CP, rooted at the origin, is a unimodular graph. Conversely, given a unimodular planar graph $[\bs G, \bs o]$, every point-stationary CP of $[\bs G, \bs o]$ is a random assignment of circles to the vertices of $\bs G$ such that the resulting CP is point-stationary (see \textit{equivariant circle packings} discussed below).

\begin{example}
	\label{ex:canopy}
	The following is a point-stationary CP.
	Let $y_0<y_1<\cdots$ be a sequence which will be determined later. Let $U_0,U_1,\ldots$ be i.i.d. random numbers chosen uniformly in $\{\pm 1\}$ and $a_n:=\sum_{i=0}^{n-1}2^iU_i$. For every $n\geq 0$ and $m\in\mathbb Z$, put a ball of radius $(2^{n-1}-0.01)$ centered at $(\frac 12 a_n + m 2^n, y_n)$. It can be seen that the sequence $y_n$ can be uniquely chosen such that each circle at level $y_n$ is externally tangent to precisely two circles at level $y_{n-1}$ (if $n\geq 1$) and one circle at level $y_{n+1}$. Finally, let $X$ be a random point such that $X=(\frac 1 2  a_n, y_n)$ with probability $2^{-n-1}$ (given the sequence $(U_i)_i$). By translating these set of circles by vector $-X$, one obtains a point-stationary CP. The tangency graph is the \textit{canopy tree}.
	\\
	It is easy to see that this CP is not the Palm version of any stationary circle packing. 
	More generally, we guess that the canopy tree does not have any stationary CP. 
\end{example}

\begin{remark}
	\label{rem:voronoi}
	Under the following condition, one can convert a point-stationary CP to a stationary CP. Consider the Voronoi tessellation of the set of centers and let $\bs C$ be cell of the origin. If $\bs C$ has finite expected area, then one can obtain a stationary CP as follows: Bias the probability measure by the area of $\bs C$ and then, move a random point of $\bs C$, chosen uniformly, to the origin by a translation. 
	This is similar to the proof of~\cite{BeTi19} for the CP-hyperbolic case
	and is a special case of the \textit{inversion formula} (see e.g., \cite{ThLa09}).
\end{remark}

Let $[\bs G, \bs o]$ be a unimodular planar graph. Assume that one assigns a number $\bs r(v)$ to every vertex $v\in V(\bs G)$ and a number $\bs d(u,v)$ to every pair $(u,v)$ of vertices, possibly using extra randomness, such that:\footnote{These conditions can also be formulated by \textit{unimodular embeddings} defined in~\cite{BeTi19}.}
\begin{enumerate}[(i)]
	\item Unimodularity is preserved by adding the marks (see Remark~\ref{rem:equivprocess}).
	\item In almost every realization, there exists a CP of $[\bs G, \bs o]$ such that $\bs r(v)$ is the radius of the circle corresponding to $v$ and $\bs d(u,v)$ is the distance of the centers of the corresponding circles (such a CP is not uniquely determined).
\end{enumerate}

Call the pair $\bs r$ and $\bs d$ an \defstyle{equivariant circle packing} of $[\bs G, \bs o]$. Note that it is not an actual CP since it determines a CP only up to isometries (and does this uniquely).
Note also that if $\bs G$ is a triangulation, then every realization of $\bs G$ has a unique CP up to similarities and there are infinitely many scalings of such a CP. In this case, heuristically, if one chooses one of these infinitely many CPs without looking at the root (in each realization), then an equivariant circle packing is obtained (see Remark~\ref{rem:equivprocess}).

\begin{remark}
	\label{rem:equivprocess}
	Roughly speaking, assigning extra marks to a unimodular graph (possibly using extra randomness) preserves unimodularity if and only if the assignment is done without looking at the root and also depends only on the isomorphism class of the graph (in a measurable way). See \textit{equivariant process} in~\cite{I} for a formal statement.
\end{remark}

\begin{lemma}
	\label{lem:equivCP}
	If $[\bs G, \bs o]$ has an equivariant circle packing, then it has a point-stationary circle packing.
\end{lemma}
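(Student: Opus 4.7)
The plan is to obtain the point-stationary CP by centering the root circle at the origin and averaging over the residual rotational freedom. Given the equivariant data $(\bs r,\bs d)$, the underlying CP of $[\bs G,\bs o]$ is unique up to isometries of $\mathbb R^2$; if one translates so that the center of $C_{\bs o}$ sits at the origin, the remaining freedom is a compact group $K$ of origin-fixing isometries. Using a measurable selection, fix a reference realization $P^\ast$ with $C_{\bs o}$ centered at the origin, and set $\bs P:=I(P^\ast)$, where $I$ is an independent Haar-uniform sample from $K$. The resulting point process $\Phi$ of circle centers contains $0$ almost surely and carries the radii as marks.

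To verify point-stationarity, I would invoke the mass transport principle for the marked unimodular graph $(\bs G,\bs o,\bs r,\bs d)$, which holds by condition (i) in the definition of an equivariant CP. For a translation-invariant nonnegative test function $g$, define
\[
F([G,u,v,r,d]) \;:=\; \omid{g(\bs P_u,0,c^u_v)},
\]
where $\bs P_u$ is the CP built by the same construction but with $u$ in place of the root, the expectation is over an independent Haar-uniform $I$, and $c^u_v$ denotes the center of $C_v$ in $\bs P_u$. Averaging over $K$ kills the ambiguity in the choice of $P^\ast$, so $F$ depends only on the isomorphism class of the doubly-rooted marked graph and qualifies as a transport function. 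The MTP then gives
\[
\omid{\sum_{v} F([\bs G,\bs o,v])} \;=\; \omid{\sum_{v} F([\bs G,v,\bs o])}.
\]

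The left-hand side is precisely $\omid{\sum_v g(\bs P,0,c_v)}$, one side of the point-stationary MTP. For the right-hand side, the key identity is
\[
\bs P - c_v \;=\; I(P^\ast) - I(c^\ast_v) \;=\; I(P^\ast - c^\ast_v),
\]
which uses orthogonality of $I$. Since $P^\ast - c^\ast_v$ is a valid reference centered at $v$ and $I$ is Haar-uniform on the compact group $K$, the two-sided invariance of Haar measure implies $\bs P - c_v$ has the same distribution as $\bs P_v$. Translation invariance of $g$ then converts each summand on the right-hand side into $g(\bs P,c_v,0)$, yielding the other side of the point-stationary MTP, so $\bs P$ is the desired point-stationary CP.

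The main bookkeeping obstacle is the measurable selection of $P^\ast$ together with the correct identification of $K$ (notably whether reflections are to be included, which depends on the treatment of the combinatorial embedding). In either case $K$ is a compact subgroup of the orthogonal group carrying a Haar probability measure, and the averaging argument above closes the proof once the measurability issues are handled via a standard selection theorem applied to the Polish space of realizations consistent with $(\bs r,\bs d)$.
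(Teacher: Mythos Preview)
Your approach is essentially the same as the paper's: center the circle of $\bs o$ at the origin and then average over the compact group of origin-fixing isometries. The paper's proof is terse---it simply asserts that ``it can be seen that this CP is point-stationary''---whereas you supply the details by pushing a translation-invariant $g$ through the mass transport principle of the marked unimodular graph, which is exactly the intended verification. Your treatment of the key identity $I(P^\ast)-I(c^\ast_v)=I(P^\ast-c^\ast_v)$ and the use of Haar invariance to absorb the choice of reference are correct, and your remarks on measurable selection and on whether $K$ should be $SO(2)$ or $O(2)$ are reasonable caveats (the paper explicitly averages over both rotations and reflections, consistent with the fact that the data $(\bs r,\bs d)$ do not encode orientation).
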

\begin{proof}
	In every realization, choose an instance of the equivariant CP such that the circle corresponding to $\bs o$ is centered at the origin. Then, apply a random isometry that fixes the origin (using the uniform measure on rotations and the uniform measure on reflections). The result does not depend on the chosen instance since the equivariant CP uniquely determines a CP up to isometries. It can be seen that this CP is point-stationary. Note that there might be other point-stationary CPs of $[\bs G, \bs o]$ as well (e.g., the standard hexagonal CP is point-stationary and there is no need to apply a random isometry).
\end{proof}

\section{An Example With No Stationary CP}
\label{sec:scpCounter}
In this section, an example will be constructed to prove Theorem~\ref{thm:scp}. The proof is based on the following easy result.

\begin{theorem}
	\label{thm:scp2}
	Assume $[\bs G, \bs o]$ is a unimodular triangulation that has a unique automorphism other than the identity almost surely. Then $[\bs G, \bs o]$ has no stationary circle packing.
\end{theorem}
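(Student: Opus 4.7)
The plan is to assume for contradiction that $[\bs G, \bs o]$ admits a stationary circle packing $\bs P$ (in $\mathbb{R}^2$ or in the hyperbolic plane, according to the type of $\bs G$) and to extract from the unique non-identity automorphism an equivariantly defined random point or line/geodesic, which will then contradict the non-existence of translation- or isometry-invariant probability measures on the relevant spaces.

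First I would observe that the unique non-identity automorphism $\tau$ of $\bs G$ is an involution, since $\{\mathrm{id},\tau\}$ must form a group. Applying Lemma~\ref{lem:automorphism} in almost every realization yields an isometry $T$ of the ambient (Euclidean or hyperbolic) plane representing $\tau$. Uniqueness of $T$ follows from the uniqueness of the circle packing of a one-ended triangulation up to similarities: any two isometries representing $\tau$ differ by an isometry fixing every circle $C_v$, hence fixing the centers of three non-collinear circles, which forces it to be the identity. In particular $T^2$ represents $\tau^2=\mathrm{id}$, so by the same uniqueness $T^2=\mathrm{id}$. Thus $T$ is a nontrivial involutive isometry, and in each geometry the only such isometries are rotations by $\pi$ about a point and reflections in a line or geodesic.

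Let $F(\bs P)$ denote the fixed set of $T$; by the previous paragraph $F(\bs P)$ is either a single point or a single line (resp.\ geodesic), and it is a measurable function of $\bs P$, since the nerve, the automorphism $\tau$, and then $T$ are all measurably determined by $\bs P$. Moreover $F$ is equivariant: if $\alpha$ is a translation of $\mathbb{R}^2$ (resp.\ an isometry of $\mathbb{H}^2$), then the involutive isometry representing $\tau$ for $\alpha(\bs P)$ is $\alpha T \alpha^{-1}$, whose fixed set is $\alpha(F(\bs P))$. Stationarity of $\bs P$ therefore forces the distribution of $F(\bs P)$ to be invariant under all translations (resp.\ hyperbolic isometries) of the ambient plane.

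This yields the desired contradiction, because no such invariant probability measure exists: $\mathbb{R}^2$ and the space of lines in $\mathbb{R}^2$ carry no translation-invariant probability measure (for lines, one conditions on the direction and reduces to the non-existence of a translation-invariant probability on $\mathbb{R}$); and $\mathbb{H}^2$ and its space of geodesics carry no isometry-invariant probability measure. The main delicate point I anticipate is the justification of the uniqueness of $T$ and the identity $T^2=\mathrm{id}$ from Lemma~\ref{lem:automorphism}, together with the measurability of the maps $\bs P \mapsto \tau \mapsto T \mapsto F(\bs P)$; once these are in hand, the rest is a short equivariance argument.
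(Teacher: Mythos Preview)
Your argument is correct and follows essentially the same route as the paper: extract from Lemma~\ref{lem:automorphism} the unique isometry $T$ representing the nontrivial automorphism, observe $T^2=\mathrm{id}$ so that $T$ is a point-reflection or a line-reflection, and derive a contradiction from the nonexistence of a translation- (or isometry-) invariant law on points or lines. You supply slightly more detail than the paper on the uniqueness of $T$ and on the hyperbolic case, but the strategy is identical.
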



\begin{proof}
	Assume $\bs G$ is CP-parabolic for simplicity (the  general case is similar).
	Let $\bs P$ be a stationary CP of $[\bs G, \bs o]$. By Lemma~\ref{lem:automorphism}, there exists a unique isometry $\bs T$ of the plane (other than the identity and depending on $\bs P$) that permutes the circles in $\bs P$. So $\bs T\circ\bs T$ is the identity, and hence, $\bs T$ is either a reflection through a point, namely $\bs x$, or a reflection through a line, namely $\bs l$. It can be seen that $\bs T$ is a measurable function of $\bs P$. Therefore, in the first case, $\bs x$ is a random point in the plane such that its distribution is invariant under the translations, which is impossible. In the second case, $\bs l$ is a random line such that its distribution is translation-invariant, which is again impossible. So the claim is proved.
\end{proof}

Note \mar{Later: If I prove that there is no way to distinguish an automorphism, make this a proposition.} that the above result holds both in the Euclidean plane and the hyperbolic plane. Therefore, Theorem~3.1 of~\cite{BeTi19} directly implies that no nonamenable unimodular planar graph with finite expected degree can have a unique automorphism (other than the identity).



The following are examples satisfying the assumption of Theorem~\ref{thm:scp2}. The first example is easier to analyze, but has unbounded degree. It is modified in the second example to have bounded degrees.
\begin{figure}[t]
	\begin{center}
		\includegraphics[width=.9\textwidth]{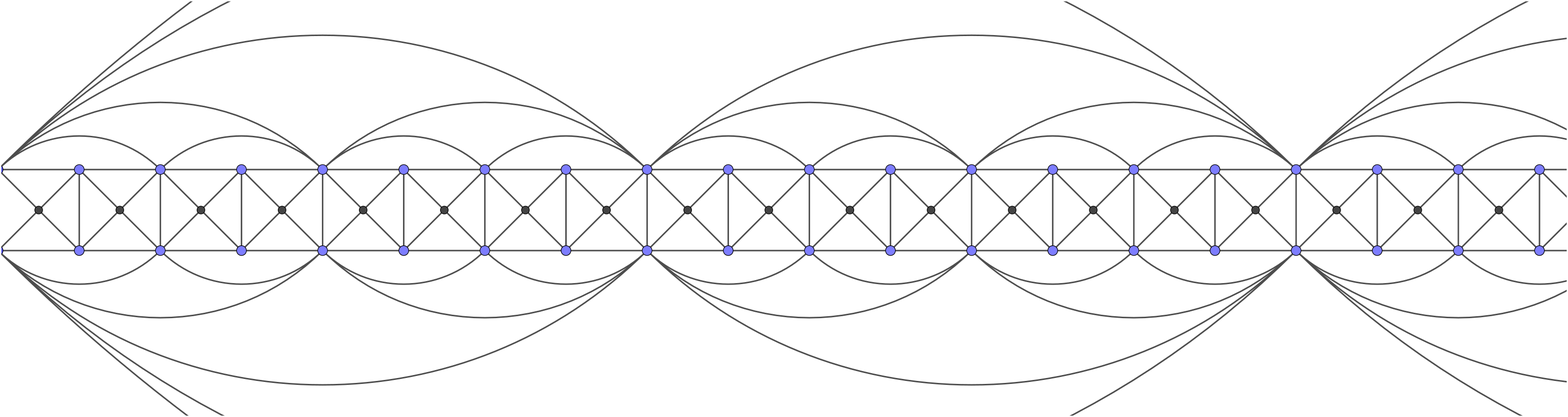}
		\caption{The graph of Example~\ref{ex:sCP}.}
		\label{fig:scp}
	\end{center}
\end{figure}

\begin{example}
	\label{ex:sCP}
	Let $u_0,u_1,\ldots$ be i.i.d. uniform random numbers in $\{0,1\}$ and let $a_n:=\sum_{i=0}^{n-1} 2^iu_i$. For every $n> 0$ and $m\in\mathbb Z$, connect the point $(m 2^n + a_n, 0)$ to $((m+1)2^n+a_n,0)$ by a semicircular arc in the upper half plane. For $n=0$, do the same but use straight line segments instead of arcs (see Figure~\ref{fig:scp}). Note that these arcs do not intersect and every arc in level $n$ contains exactly two arcs in level $n-1$ (if $n\geq 1$) and is contained in one arc in level $n+1$. So a triangulation of the upper half plane is obtained, namely $\bs G_0$. Reflect this triangulation about the line $y=-\frac 1 2$ and connect every point $(m,0)$ to $(m,-1)$. Finally, in each of the resulting squares in the band $-1\leq y\leq 0$, add a new vertex in the center and connect it to all four corners to obtain a triangulation of the whole plane, namely $\bs G$. Note that this triangulation is invariant under the translation by vector $(m,0)$ for every $m\in\mathbb Z$. Using this, one can obtain that if $\bs o$ is a random point in $\{(0,0), (0,-1), (\frac 12, -\frac 12) \}$ chosen uniformly, then $[\bs G, \bs o]$ is unimodular. It is easy to see that $\bs G$ is locally finite and has exactly one automorphism other than the identity. So, by Theorem~\ref{thm:scp2}, it cannot have a stationary CP. Also, it is CP-parabolic a.s. (otherwise, by~\cite{BeTi19}, it would have a stationary CP). In fact, it will be proved in Section~\ref{sec:anotherapproach} that this graph does not have any point-stationary CP as well.
	%
\end{example}

\begin{figure}[t]
	\centering
	\includegraphics[width=.6\textwidth]{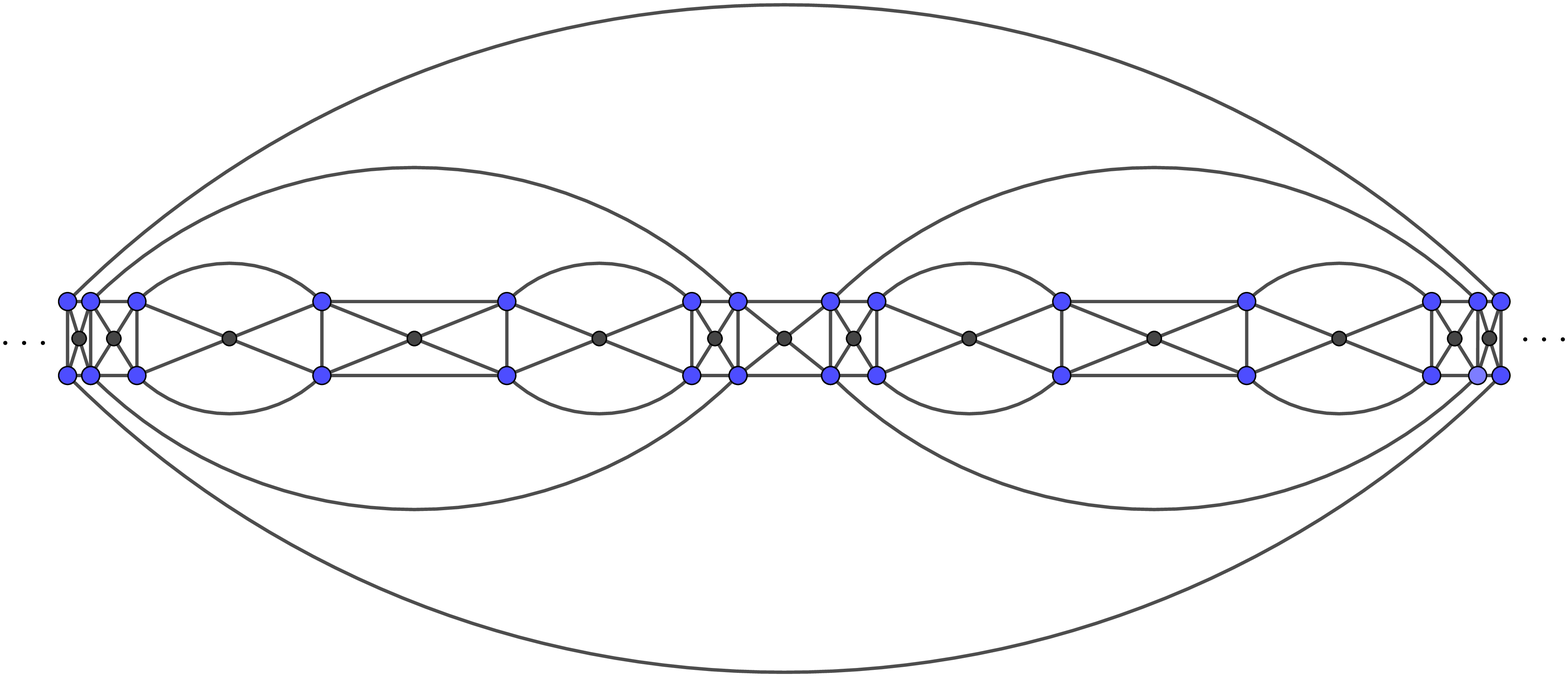}
	\caption{Part of the graph of Example~\ref{ex:sCPbounded} (the vertices inside the hexagons are not shown).}
	\label{fig:scp2}
\end{figure}

\begin{example}
	\label{ex:sCPbounded}
	Modify the above example by connecting $(m 2^n + a_n - 2^{-n-2}, 0)$ to $((m+1)2^n+a_n+2^{-n-2},0)$ for all $n\geq 0$ and $m\in\mathbb Z$. Then, connect every vertex to the next according to the increasing order on the line, except where there is already a semicircular edge (see Figure~\ref{fig:scp2}). In every resulting hexagon, add a vertex inside the hexagon and connect it to all six vertices. Then, reflect this graph about the line $y=-\frac 12$ and continue in the same manner. Note that one can correspond to every vertex in this graph a unique $k\in \mathbb Z$ naturally (e.g., the closes point of the form $(k,0)$). Using this, one can bias the probability measure and change a new root such that a unimodular graph is obtained (see the examples of~\cite{processes} or the general \textit{unimodular extension} in~\cite{shift-coupling}). Similarly to Example~\ref{ex:sCP}, this example has no stationary CP.
	%
\end{example}

\begin{proof}[Proof of Theorem~\ref{thm:scp}]
	The claim is directly implied by Theorem~\ref{thm:scp2} and Example~\ref{ex:sCPbounded}.
\end{proof}

\section{An Example with No Point-Stationary CP}
\label{sec:pscpCounter}

This section constructs an example satisfying Theorem~\ref{thm:pscp}. The construction is by a modification of Example~\ref{ex:sCP}. In that Example, consider the subgraph formed by the vertices and edges in the upper closed half plane. 
Not that the dual of this subgraph is the canopy tree: Each triangle has one \textit{parent triangle} and (except in the lowest level) two \textit{offspring triangles}. In this section, a similar construction will be provided by letting each triangle have either 2 or 0 offspring triangles randomly, as described below. 

\begin{figure}[t]
	\centering
	\includegraphics[width=.9\textwidth]{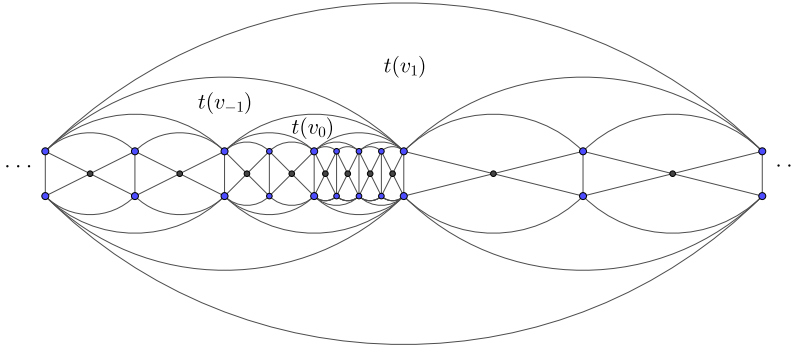}
	\caption{Part of the graph $[\bs G, \bs o]$. The upper half, excluding the vertices on the axis of symmetry, is $\bs G_0$. If $v_a:=(a,0)$, then the  vertices of $\bs G_0$ shown in this figure are $v_{-3},v_{-2},v_{-1},v_{-1/2}, v_0, v_{1/4}, v_{1/2}, v_{3/4}, v_1, v_3, v_5$ from left to right. One has $p(v_{-2})=p(v_0)=v_{-1}$, $p(v_{-1})=p(v_3)=1$, $p(v_{-1/2}) = p(v_{1/2})=0$ and $p(v_{1/4})=p(v_{3/4})=1/2$. Also, $v_1\in L_{-2}$, $\{v_{-1},v_3\}\subseteq L_{-1}$, $\{v_{-2},v_0\}\subseteq L_0$, $\{v_{-1/2},v_{1/2}\}\subseteq L_1$ and $\{v_{1/4},v_{3/4}\}\subseteq L_2$.
	}
	\label{fig:pscp}
\end{figure}

First, \mar{An easier description: In an ordered EGW drawn in the plane, add a new vertex and connect it to all leaves by non-intersecting curves. The dual of this graph is $\bs G$!} a graph $\bs G_0$ is constructed in the upper half plane such that its faces are triangles and all edges are semicircles (see Figure~\ref{fig:pscp}). 
In this graph, each triangle is of the form $t_{a,b}$, where $t_{a,b}$ is the triangle with vertex set $\{(a,0),(b,0), (\frac{a+b}{2},0)\}$. Every such triangle has a \textit{parent triangle}, which is either $t_{2a-b,b}$ or $t_{a,2b-a}$. Also, $t_{a,b}$ will have either zero or two \textit{offspring triangles}, which are $t_{a,(a+b)/2}$ and $t_{(a+b)/2,b}$. To construct the graph, start with the \textit{root triangle} $t_{-1,1}$. 
Then, draw its parent triangle by choosing between the two options randomly, each case with probability $1/2$. Do the same for the new triangle and continue inductively to construct the \textit{ancestor line} of the root triangle. Each one of these triangles, except the root triangle, has already one offspring triangle. Draw the other offspring triangle as well. In the next step, for each one of the triangles (including the root triangle), either add the two offspring triangles or add none of them (each case with probability $1/2$ independently of the other triangles) and continue inductively for the new triangles. 
By continuing this process, the graph drawn in the plane will converge to a graph which we call $\bs G_0$. Indeed, the triangles \textit{below} every given triangle form a Galton-Watson process which is critical, and hence, will extinct a.s. So every triangle has finitely many descendants. Also, keep track of the order of the vertices by assigning marks; e.g., by directing the edges from left to right. In this construction, the dual graph (excluding the infinite face) is an (ordered) eternal Galton-Watson tree defined in~\cite{eft} (also studied previously in~\cite{fringe}). By \cite{eft}, the dual graph is unimodular and it implies that $[\bs G_0,\bs o_0]$ is also a unimodular graph, where $\bs o_0:=0$ (but the constructed planar embedding is not unimodular). Finally, as in Example~\ref{ex:sCP}, reflect $\bs G_0$ about the line $y=-\frac 1 2$, connect every vertex $(a,0)$ to $(a,-1)$, and in every resulting quadrilateral, add a new vertex in the center and connect it to all four corners. Call this planar triangulation $\bs G$. If $\bs o$ is chosen from $(0,0), (0,-1)$ and the first vertex on the half line $\{x>0, y=-\frac 12\}$ randomly and uniformly, then $[\bs G, \bs o]$ is a unimodular planar triangulation (but the embedding is not unimodular). 

\begin{proof}[Proof of Theorem~\ref{thm:pscp}]
	We claim that $[\bs G,\bs o]$, constructed above, does not have any point-stationary CP. An example with bounded degrees can be constructed by modifying $[\bs G, \bs o]$ similarly to Example~\ref{ex:sCPbounded}. For simplicity of the proof, the modifications are only summarized in Remark~\ref{rem:pscpbounded}.
	
	Assume $[\bs G, \bs o]$ has a point-stationary circle pacing.
	Since $\bs G$ has a unique automorphism other than the identity and it has infinitely many fixed points, by Lemma~\ref{lem:automorphism}, the set of circles should be invariant under a unique reflection, namely through line $\bs l$. This reflection swaps the circles corresponding to vertices $(a,0)$ and $(a,-1)$. Since these circles are tangent, both of them are tangent to $\bs l$. Therefore, a CP of $\bs G_0$ is obtained such that all circles are tangent to $\bs l$. 
	It can be seen that by conditioning on the event $\bs o=0$, a point-stationary CP of $[\bs G_0,\bs o_0]$ is obtained (indeed, $\bs G_0$ is an \textit{equivariant subgraph} of $\bs G$, see~\cite{I}). This contradicts Proposition~\ref{prop:pscp} below. So the claim is proved.
\end{proof}

\begin{proposition}
	\label{prop:pscp}
	The unimodular planar graph $[\bs G_0, \bs o_0]$ has no point-stationary circle packing in which all circles are tangent to a common line.
\end{proposition}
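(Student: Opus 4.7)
The plan is to assume for contradiction that a point-stationary circle packing of $[\bs G_0, \bs o_0]$ with all circles tangent to a common line $\bs l$ exists, and to derive a contradiction by extracting from the radii a measurable selection of foils of the dual eternal Galton-Watson tree, contradicting the indistinguishability given by Lemma~\ref{lem:foilselection}.

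First I would fix coordinates so that $\bs l$ is the $x$-axis and all circles lie above it; each vertex $v\in V(\bs G_0)$ then has a circle with center $(x(v), r(v))$ and radius $r(v)>0$, and the tangency relation for an edge $u\sim v$ reads $(x(u)-x(v))^2 = 4 r(u) r(v)$. For three mutually tangent circles forming a triangular face, all tangent to $\bs l$, this specializes to the Ford-circle identity $r(w)^{-1/2} = r(u)^{-1/2} + r(v)^{-1/2}$, where $w$ is the midpoint vertex of the face. Consequently the ratios of the radii are completely determined by the combinatorial structure of $\bs G_0$, and only a single overall scale is free.

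The key analytic step is to study the radii along the spine of the dual EGW tree. Write the ancestor line of the root triangle as $s_0, s_1, s_2, \ldots$ and let $M_n$ denote the midpoint vertex of $s_n$ (so $M_0 = \bs o_0$). Tracking which of the two base vertices of $s_n$ becomes the new midpoint $M_{n+1}$ (a Bernoulli-$1/2$ decision), the Ford identity at $s_n$ yields the recursion $r(M_{n+1})^{-1/2} = r(M_n)^{-1/2} - r(e_n)^{-1/2}$, where $e_n$ is the other base of $s_n$. Positivity of the radii forces $\sum_n r(e_n)^{-1/2} < \infty$, hence $r(e_n) \to \infty$. Combined with an analogous analysis of the almost surely finite descendant subtrees, this rigidly correlates $\log r(v)$ with the foil (horoheight) of $v$ in the dual EGW tree, up to a controlled fluctuation.

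From this rigidity I would extract a measurable, equivariant, nontrivial labelling of the foils — for instance by rounding $\log r(v)$, normalized in a scale-invariant fashion, to its nearest integer modulo some fixed $N\geq 2$ — which contradicts the indistinguishability of foils asserted by Lemma~\ref{lem:foilselection}. The main obstacle is precisely this last step: the random descendant subtrees introduce fluctuation in $\log r$ within a foil that must be shown to be dominated by the geometric growth along the spine, in a sufficiently uniform way that a clean measurable selection can be made. Controlling this fluctuation will rely ultimately on the criticality and finite variance of the underlying Galton-Watson offspring law, together with the point-stationary mass transport principle applied along the parent map of the EGW tree.
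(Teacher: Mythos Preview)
Your high-level plan matches the paper's: assume a point-stationary CP tangent to a line exists, use the radii to distinguish the foils $L_n$, and contradict Lemma~\ref{lem:foilselection}. The Ford identity you write down is correct and underlies the paper's observation that $\bs r(v)<\bs r(p(v))$. But the execution has a genuine gap at exactly the point you flag as the ``main obstacle.'' Within a single foil $L_n$ the radii are not concentrated: $r(v)$ for $v\in L_n$ depends on the entire ancestral path of $v$, and you offer no bound (and none is obviously available) showing that the within-foil fluctuation of $\log r$ is dominated by the drift between foils. Moreover, rounding $\log r$ modulo $N$ would at best produce a periodic $\mathbb Z/N\mathbb Z$-colouring of vertices, not the selection of a \emph{single} foil that Lemma~\ref{lem:foilselection} forbids; to even get a colouring constant on foils you would already need the fluctuation bound you do not have.

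The paper bypasses all of this by working with spatial averages rather than individual radii. Using the shift $\tau$ along each foil and Birkhoff's theorem, it defines $\ave(\varphi\circ\bs r; L_n)$ with $\varphi(x)=x/(1+x)$, a single real number attached to each foil. The crux is then to prove strict monotonicity $\ave(\varphi\circ\bs r; L_{n+1})<\ave(\varphi\circ\bs r; L_n)$ a.s., and this requires a nontrivial independence argument: conditional on the graph above $L_0$, the offspring indicators $(\bs\epsilon(v))_{v\in L_0}$ are i.i.d.\ and, via Lemmas~\ref{lem:ergodic} and~\ref{lem:ave}, independent of the radii along $L_0$, so the average over $L_1$ cannot be inflated by high-radius vertices being preferentially the ones that reproduce. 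Strict monotonicity then gives a single-foil selection by thresholding, $N=\min\{n:\ave(\varphi\circ\bs r;L_n)<a\}$, which contradicts Lemma~\ref{lem:foilselection}. Your spine analysis is a reasonable warm-up, but the contradiction genuinely lives at the level of ergodic averages along foils, not pointwise radii.
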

\begin{proof}
	The following notations are used in the proof.
	Every vertex $v$ of $\bs G_0$ is the middle vertex of some triangle, denoted by $t(v)$. So the genealogical structure of the triangles induces a similar structure on the vertices. For each vertex $v$, let $p(v)$ denote its parent (see Figure~\ref{fig:pscp}). Let $l(v,w)$ denote the number of generations between $v$ and $w$, which is defined by the equations $l(v,v):=0$ and $l(v,p(v))=-1$. 
	Observe that $p(v)$ is one of the vertices of $t(v)$ and the other vertex of this triangle satisfies $l(v,w)\leq -2$. 
	Let $L_n:=\{v: l(\bs o_0,v)=n\}$ be the $n$'th generation of the vertices (w.r.t. the root).  
	The ascending order on the $x$ axis (in the definition of $\bs G_0$) induces an order on $L_n$. By~\cite{eft}, $L_n$ is nonempty for every $n\in\mathbb Z$ and is infinite from both sides. For $v\in L_n$, let $\tau(v)$ denote the next vertex in $L(v)$ according to this order. For $m\in \mathbb Z$, let $\tau^m(v)$ be defined by the $m$-fold composition of $\tau$.
	
	Assume there exists a point-stationary circle packing $\bs C$ such that all circles are tangent to a common line. For vertices $v$, let $\bs r(v)$ denote the radius of $C_v$. 
	Note that $\tau$ is a bijective map on the vertices which is defined independently of the root. Therefore, the distribution of $[\bs G_0,\bs o_0]$ is invariant under moving according to $\tau$; i.e., $[\bs G_0,\tau(\bs o_0)]$ has the same distribution as $[\bs G_0,\bs o_0]$ (see Proposition~3.6 of~\cite{eft}). By point-stationarity, the same holds for the circle packing of $\bs G_0$ (see Mecke's point stationarity in~\cite{ThLa09}). So, by letting $\varphi(x):=\frac x{1+x}$,  Birkhoff's pointwise ergodic theorem implies that the spatial average of $\varphi(\bs r(w))$ for $w\in L_0$, defined by the following formula, exists almost surely.
	\begin{equation}
		\label{eq:ave}
		\ave(\varphi\circ\bs r; L_0):=\lim_{M\to\infty} \frac 1 {2M+1} \sum_{m=-M}^M \bs \varphi(\bs r(\tau^m(\bs o_0))).
	\end{equation}
	By unimodularity, the same holds for all level sets (see e.g., Lemma~2.6 of~\cite{eft}). Hence, $\ave(\varphi\circ\bs r;L_n)$, which is defined for each $n$ by a similar formula, exists a.s. 
	Note that $\bs r(v)< \bs r(p(v))$ for every $v$. Therefore, heuristically, one can expect that $\ave(\varphi\circ\bs r; L_1)< \ave(\varphi\circ \bs r; L_0)$. The potential obstacle is that those vertices in $L_0$ with no offspring have so small radius that affects the average $\ave(\varphi\circ \bs r; L_0)$. In the next steps, it will be shown that the heuristic is indeed true by showing that the number of offsprings of the vertices in $L_0$ are mutually independent and also independent from their radii.
	
	Let $\bs H_0$ be the subgraph of $\bs G_0$ spanned by $\{v: l(\bs o_0,v)\leq 0\}$. Note that $\bs G_0$ is obtained by appending to $\bs H_0$ finite trees at the nodes of $L_0$. For $v\in L_0$, let $D(v)$ be the branch that is appended at $v$ (which consists of the descendants of $v$).
	Also, let $\bs\epsilon(v):=1$ if $v$ has two offsprings and $\bs\epsilon(v):=0$ otherwise.
	Let $\bs H_1$ be the graph obtained by extending $\bs H_0$ by appending two offsprings to every vertex $v\in L_0$ in the same manner as the definition of $\bs G_0$ ($\bs H_1$ is not a subgraph of $\bs G_0$). The restriction of the circle packing $\bs C$ to $\bs H_0$ uniquely extends to a CP of $\bs H_1$ in which all circles are tangent to the common line. For $v\in L_0$, let $\bs s(v):=\frac 12 (\varphi(a)+\varphi(b))$, where $a$ and $b$ are the radii of the circles corresponding to the offsprings of $v$. One can show that $\ave(\varphi\circ\bs r;L_1)=\ave(\bs s; \{\bs\epsilon=1 \})$, where $\{\bs\epsilon=1\}$ stands for the set $\{v\in L_0: \bs\epsilon(v)=1 \}$. The ergodic theorem also implies that $\ave(\bs s; \{\bs\epsilon=1 \}) = \ave(\bs\epsilon\bs s; L_0)/\ave(\bs\epsilon; L_0) = 2\ave(\bs\epsilon\bs s; L_0)$ a.s.
	On the other hand, the definition of $\bs s$ implies that $\bs s<\varphi\circ\bs r$. Therefore,
	\begin{equation}
	\label{eq:<0}
	\ave(\varphi\circ\bs r;L_1)=2\ave(\bs\epsilon\bs s; L_0) < 2\ave (\bs\epsilon \varphi\circ\bs r;L_0), \quad a.s.
	\end{equation}
	The above inequality holds because Birkhoff's pointwise ergodic theorem implies that for every stationary sequence $(f(v))_{v\in L_0}$, $\ave(f;L_0)$ is equal to the conditional expectation of $f(\bs o_0)$ given the invariant sigma-field corresponding to the action of $\tau$. 
	Hence, if $f<g$, then $\ave(f;L_0)<\ave(g;L_0)$ a.s.

	Conditional to $[\bs H_0,\bs o_0]$, the sequence $(D(v))_{v\in L_0}$ is an i.i.d. sequence of random trees. 
	Let $Z:=\ave(\varphi\circ\bs r;L_0)$. Note that $Z$ depends on $\bs C$ and the root, but would be the same if any of the vertices in $L_0$ would be the root; i.e., $Z$ is invariant under the action of $\tau$. Therefore, Lemma~\ref{lem:ergodic} implies that conditional to $[\bs H_0,\bs o_0]$, $Z$ is independent from the sequence $(D(v))_{v\in L_0}$. Since the sequence is also independent from $[\bs H_0,\bs o_0]$, it follows that $(D(v))_{v\in L_0}$ is independent from the pair $\left([\bs H_0,\bs o_0], Z \right)$. As a result, 
	the sequence $\bs\epsilon(\cdot)$ is independent from the sequence $\varphi\circ\bs r(\cdot)$ (since the latter is determined by $[\bs H_0,\bs o_0]$ and $Z$). Therefore, Lemma~\ref{lem:ave} implies that 
	\[
		\ave(\bs\epsilon \varphi\circ\bs r) = \ave(\bs\epsilon)\cdot \ave(\varphi\circ\bs r) = \frac 12 \ave(\varphi\circ\bs r),\quad a.s.
	\]
	So~\eqref{eq:<0} implies that $\ave(\varphi\circ\bs r;L_1)< \ave(\varphi\circ\bs r;L_0)$ a.s., as claimed.
	By unimodularity, one gets 
	\[
		\forall n\in\mathbb Z: \ave(\varphi\circ\bs r;L_{n+1})< \ave(\varphi\circ\bs r;L_n), \quad a.s.
	\]
	Now, for any given $a\in\mathbb R$, one can distinguish the foil $L_N$ defined by $N:=\min\{n: \ave(\varphi\circ\bs r;L_n)<a \}$ (there is at least one $a$ such that the set under minimum is nonempty with positive probability).	
	This contradicts Lemma~\ref{lem:foilselection} below. So the claim is proved. 
\end{proof}

The following lemmas are used in the above proof.

\begin{lemma}
	\label{lem:foilselection}
	For the graph $[\bs G_0, \bs o_0]$ defined in this section, there is no way to select one of the foils $\{L_n: n\in \mathbb Z\}$ (possibly using extra randomness), in an event with positive probability, such that unimodularity is preserved (see Remark~\ref{rem:equivprocess}).
\end{lemma}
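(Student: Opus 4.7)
I will argue by contradiction. Suppose there is a unimodularity-preserving selection of one of the foils on an event $A$ with $P(A) > 0$. Let $\bs S \subseteq V(\bs G_0)$ denote the selected foil (by Remark~\ref{rem:equivprocess}, $\bs S$ is a function of $\bs G_0$ and possibly extra randomness, but not of $\bs o_0$), and on $A$ let $\bs N$ be the unique integer with $\bs S = L_{\bs N}$. The first key observation is that $\bs N$ is measurable with respect to the $\tau$-invariant $\sigma$-field $\mathcal I_\tau$: re-rooting from $\bs o_0$ to $\tau(\bs o_0)\in L_0$ preserves the foil labelling $(L_n)_{n\in\mathbb Z}$, hence preserves $\bs N$. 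Similarly, $A$ is determined by $(\bs G_0,\bs S)$ alone and is $\mathcal I_\tau$-measurable.

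Next, for $u\in V(\bs G_0)$ define $l_{\bs S}(u) := l(w, u)$ for any $w \in \bs S$; this is well defined on $A$ (independent of the choice of $w$, since all vertices of $\bs S$ lie in one foil), intrinsic to the marked graph, satisfies $l_{\bs S}(\bs o_0) = -\bs N$, and decreases by $1$ under taking parents. Apply the mass transport principle to
\[
g(u, v) := \mathbb 1_A \, \mathbb 1\!\bigl[v = p(u),\ l_{\bs S}(u) = k\bigr].
\]
The left-hand side evaluates to $P(A,\, \bs N = -k)$ because $p(\bs o_0)$ is unique. On the right-hand side, every offspring $u$ of $\bs o_0$ satisfies $l_{\bs S}(u) = 1 - \bs N$, so its contribution is $2\bs\epsilon(\bs o_0)\,\mathbb 1[\bs N = 1-k]$, where $\bs\epsilon(\bs o_0)$ is the indicator that $\bs o_0$ has two offspring (so that the number of offspring is $2\bs\epsilon(\bs o_0)$). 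Reindexing yields
\[
P(A,\, \bs N = n) = 2\, E\!\bigl[\mathbb 1_A\, \bs\epsilon(\bs o_0)\, \mathbb 1[\bs N = n+1]\bigr] \qquad (n \in \mathbb Z).
\]

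To evaluate the right-hand side, as in the proof of Proposition~\ref{prop:pscp}, conditional on $\bs H_0$ the sequence $(\bs\epsilon(v))_{v\in L_0}$ is i.i.d.\ Bernoulli$(1/2)$, so by the strong law of large numbers $\ave(\bs\epsilon; L_0) = 1/2$ a.s. By Birkhoff's pointwise ergodic theorem applied to the $\tau$-action, $E[\bs\epsilon(\bs o_0)\mid \mathcal I_\tau] = \ave(\bs\epsilon; L_0) = 1/2$ a.s. Since $\mathbb 1_A \mathbb 1[\bs N = n+1]$ is $\mathcal I_\tau$-measurable, it factors out of the conditional expectation, and the right-hand side of the displayed identity equals $2\cdot(1/2)\cdot P(A,\, \bs N = n+1) = P(A,\,\bs N = n+1)$. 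Thus $P(A,\, \bs N = n)$ is independent of $n\in\mathbb Z$; since its sum over $n$ equals $P(A) < \infty$, each term must vanish, contradicting $P(A) > 0$.

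The main technical point requiring care is the $\mathcal I_\tau$-measurability of $\bs N$ in the presence of extra randomness: one must verify that the extra randomness is carried equivariantly under re-rooting (as ensured by Remark~\ref{rem:equivprocess}), so that $\bs S$ is the same subset of vertices under both the old and new rootings. A secondary point is the conditional i.i.d.\ structure of the descendant forests hanging from $L_0$, which is inherent in the eternal Galton--Watson construction of $\bs G_0$ taken from~\cite{eft}, and which makes $\ave(\bs\epsilon; L_0) = 1/2$ a.s.\ rather than only in expectation.
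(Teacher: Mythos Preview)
Your argument is correct, but it follows a different route from the paper. The paper uses a single \emph{global} transport: each vertex sends unit mass to its first ancestor in the selected foil $\bs S$, so the outgoing mass is at most $1$ while the expected incoming mass equals $\omid{f(\bs o_0)\,\card{D(\bs o_0)}}$ with $f=\mathbb 1[\bs o_0\in\bs S]$. Lemma~\ref{lem:ergodic} is then invoked to show that the $\tau$-invariant indicator $f(\bs o_0)$ is independent of $\card{D(\bs o_0)}$ (the latter being part of the i.i.d.\ family $(D(v))_{v\in L_0}$), and the contradiction comes from $\omid{\card{D(\bs o_0)}}=\infty$ for the critical Galton--Watson tree. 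Your proof instead transports one generation at a time, stratified by level relative to $\bs S$; this produces the recursion $P(A,\bs N=n)=2\,\omid{\bs\epsilon(\bs o_0)\,\mathbb 1_A\mathbb 1[\bs N=n+1]}$, and then Birkhoff together with $\ave(\bs\epsilon;L_0)=\tfrac12$ a.s.\ collapses it to $P(A,\bs N=n)=P(A,\bs N=n+1)$, forcing all terms to vanish. Both proofs ultimately exploit the same ergodic ingredient (that $\tau$-invariant quantities decouple from the i.i.d.\ descendant structure on $L_0$). What your version buys is that it only uses criticality in the form $\omid{2\bs\epsilon(\bs o_0)}=1$, not the stronger fact $\omid{\card{D(\bs o_0)}}=\infty$; what the paper's version buys is a one-line transport with no bookkeeping over levels.
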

\begin{proof}
	Let $g(u,v):=1$ if $v$ is the first ancestor of $u$ in the selected foil and $g(u,v):=0$ otherwise. Then ${\sum_v g(\bs o_0,v)}\leq 1$ a.s. Also, $\omid{\sum_v g(v,\bs o_0)} = \omid{f(\bs o_0)\card{D(\bs o_0)}}$, where $D(\bs o_0)$ is the set of descendants of $v$ and $f(u):=1$ if $u$ is in the selected foil. 
	By Lemma~\ref{lem:ergodic}, $f(\bs o_0)$ is independent from $\card{D(\bs o_0)}$. Therefore, $\omid{f(\bs o_0)\card{D(\bs o_0)}}= \omid{f(\bs o_0)}\omid{\card{D(\bs o_0)}}=\infty$, where the latter is because $D(\bs o_0)$ is a critical Galton-Watson tree, which gives $\omid{\card{D(\bs o_0)}}=\infty$. This contradicts the mass transport principle.
\end{proof}

\begin{lemma}
	\label{lem:ergodic}
	Assume $Z$ and $(X_n)_{n\in\mathbb Z}$ are random variables on the same probability space. Let $Y_n:=X_{n+1}$ for each $n$. If 
	$(X_n)_n$ is ergodic and $(Z,(Y_n)_n)$ has the same distribution as $(Z,(X_n)_n)$, then $Z$ is independent of $(X_n)_n$.
\end{lemma}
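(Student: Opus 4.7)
The plan is to exploit the joint shift-invariance of $(Z,(X_n))$ together with the ergodicity of $(X_n)$ via a Cesàro averaging argument.

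First, I would iterate the assumption: the equality in distribution $(Z,(Y_n)_n) \stackrel{d}{=} (Z,(X_n)_n)$ means that the "partial shift" $\tilde T(z,(x_n)_n) := (z,(x_{n+1})_n)$ preserves the joint law. By induction, for every $k\in\mathbb Z$ we have $(Z,(X_{n+k})_n) \stackrel{d}{=} (Z,(X_n)_n)$. In particular, for any bounded Borel functions $h$ (of one variable) and $f$ (of finitely many real variables), and any fixed window length $j$,
\[
\omid{h(Z)\,f(X_k,X_{k+1},\ldots,X_{k+j})} = \omid{h(Z)\,f(X_0,X_1,\ldots,X_j)}
\]
for every $k$.

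Next, I would average over $k=0,\ldots,N-1$ and push the average inside the expectation:
\[
\omid{h(Z)\,f(X_0,\ldots,X_j)} = \omid{h(Z)\cdot \frac{1}{N}\sum_{k=0}^{N-1} f(X_k,\ldots,X_{k+j})}.
\]
Because $(X_n)_n$ is ergodic (and hence so is the sequence of $(j{+}1)$-blocks under the coordinate shift), Birkhoff's pointwise ergodic theorem gives
\[
\frac{1}{N}\sum_{k=0}^{N-1} f(X_k,\ldots,X_{k+j}) \xrightarrow[N\to\infty]{a.s.} \omid{f(X_0,\ldots,X_j)}.
\]
With $f$ and $h$ both bounded, dominated convergence lets me pass to the limit on the right-hand side to obtain
\[
\omid{h(Z)\,f(X_0,\ldots,X_j)} = \omid{h(Z)}\cdot \omid{f(X_0,\ldots,X_j)}.
\]

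Finally, this factorization says that $h(Z)$ is uncorrelated with every bounded cylinder function of $(X_n)_n$. Since bounded cylinder functions generate the $\sigma$-algebra $\sigma((X_n)_n)$ (and linear spans of indicator functions are dense in the standard sense), a routine $\pi$-$\lambda$ (or monotone class) argument upgrades the factorization to
\[
\omid{h(Z)\,g\bigl((X_n)_n\bigr)} = \omid{h(Z)}\cdot \omid{g\bigl((X_n)_n\bigr)}
\]
for every bounded measurable $g$, which is exactly independence of $Z$ from $(X_n)_n$. The only subtle step is justifying the dominated-convergence/ergodic-theorem interchange, but this is immediate once both $f$ and $h$ are reduced (by truncation) to bounded functions; no ergodicity of the joint process $(Z,(X_n))$ is needed, which is fortunate since that is precisely what we are not assuming.
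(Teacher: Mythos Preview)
Your proof is correct. The Ces\`aro averaging argument, combined with Birkhoff's theorem and dominated convergence, cleanly yields the factorization on cylinder functions, and the monotone class step is routine.

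The paper's proof takes a slightly different, more conceptual route: it fixes a Borel set $A$ and observes that the conditional probability $\probCond{Z\in A}{(X_n)_n}$, viewed as a measurable function of the sequence $(X_n)_n$, is shift-invariant (this is exactly what the joint distributional invariance of $(Z,(X_n)_n)$ under $\tilde T$ buys); ergodicity then forces this function to be a.s.\ constant, hence equal to $\myprob{Z\in A}$, which is independence. So where you average explicitly and invoke Birkhoff pointwise, the paper short-circuits to the statement ``shift-invariant measurable functions of an ergodic sequence are constant.'' Your approach is a touch more elementary in that it avoids conditional expectations entirely, at the cost of the extra $\pi$--$\lambda$ extension; the paper's approach is shorter but hides the same ergodic-theorem content inside the ``invariant $\Rightarrow$ constant'' step.
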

\begin{proof}
	Consider an event of the form $Z\in A$. 
	Then $\probCond{Z\in A}{(X_n)_n}$ is a function of $(X_n)_n$ which is invariant under shift. So ergodicity implies that it is constant a.s. and the constant is $\omid{\probCond{Z\in A}{(X_n)_n}} = \myprob{Z\in A}$. This implies the claim.
\end{proof}

\begin{lemma}
	\label{lem:ave}
	Let $(X_n)_{n\in\mathbb Z}$ and $(Y_n)_{n\in\mathbb Z}$ be independent stationary sequences of random variables such that $X_1,Y_1\in L^1$. Then $\ave(XY)=\ave(X)\ave(Y)$ a.s.
\end{lemma}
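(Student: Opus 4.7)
The plan is to apply Birkhoff's pointwise ergodic theorem to each of the three stationary sequences $(X_n)$, $(Y_n)$, and $(X_n Y_n)$. The first two have finite first moments by hypothesis; the third satisfies $\mathbb{E}|X_0 Y_0| = \mathbb{E}|X_0|\cdot\mathbb{E}|Y_0| < \infty$ by independence, and is jointly stationary under the product shift $(x,y)\mapsto(\sigma x,\sigma y)$. Denoting by $\mathcal{I}_X$, $\mathcal{I}_Y$, $\mathcal{J}$ the invariant $\sigma$-fields of the marginal shifts and of the joint shift respectively, Birkhoff yields, almost surely,
\[
\ave(X) = \mathbb{E}[X_0\mid \mathcal{I}_X],\quad \ave(Y) = \mathbb{E}[Y_0\mid \mathcal{I}_Y],\quad \ave(XY) = \mathbb{E}[X_0 Y_0\mid \mathcal{J}].
\]

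The identification to establish is then $\mathbb{E}[X_0Y_0 \mid \mathcal{J}] = \mathbb{E}[X_0 \mid \mathcal{I}_X]\cdot\mathbb{E}[Y_0 \mid \mathcal{I}_Y]$ a.s. I would attack this via the ergodic decomposition: disintegrating the law of $(X_n)$ over $\mathcal{I}_X$ makes it ergodic, and similarly for $(Y_n)$ over $\mathcal{I}_Y$. Because $(X_n)$ and $(Y_n)$ are independent, the joint law conditioned on $(\mathcal{I}_X,\mathcal{I}_Y)$ is a product of two ergodic laws. On each such component, the marginal ergodic averages are constants equal to the component means, and conditional independence of $X_0$ and $Y_0$ given $(\mathcal{I}_X,\mathcal{I}_Y)$ (a consequence of independence of the sequences) then yields $\mathbb{E}[X_0Y_0 \mid \mathcal{I}_X\vee \mathcal{I}_Y] = \mathbb{E}[X_0\mid\mathcal{I}_X]\cdot\mathbb{E}[Y_0\mid\mathcal{I}_Y]$.

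The main obstacle is that a product of two ergodic shifts is not in general ergodic, so $\mathcal{J}$ may strictly contain $\mathcal{I}_X\vee \mathcal{I}_Y$, and Birkhoff on the joint sequence a priori computes a conditional expectation with respect to the richer $\sigma$-field $\mathcal{J}$. To bridge this gap, I would exploit independence of the $\sigma$-fields $\sigma((X_n))$ and $\sigma((Y_n))$: for any bounded $\mathcal{J}$-measurable test function $W$, shift-invariance of $W$ combined with Fubini should allow one to decouple the two sequences inside $\mathbb{E}[X_0 Y_0\, W]$ and reduce it to the product of marginal averages, yielding the desired factorisation. In the application of Section~\ref{sec:pscpCounter}, one of the two sequences is in fact i.i.d., which makes the joint shift mixing in that coordinate and allows this step to be carried out cleanly, for instance by centring $Y_n$ and applying a conditional second-moment estimate to the sum $\frac{1}{2M+1}\sum_{n=-M}^{M} X_n (Y_n - \mathbb{E}[Y_0])$ given $(X_n)$.
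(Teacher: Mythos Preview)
Your diagnosis of the obstacle is exactly right, and in fact it is fatal: the lemma as stated is \emph{false}, so neither your Fubini sketch nor any other argument can close the gap in general. For a concrete counterexample, fix an irrational $\alpha$, let $U,V$ be independent and uniform on $[0,1)$, and set $X_n=\cos(2\pi(n\alpha+U))$, $Y_n=\cos(2\pi(n\alpha+V))$. Both sequences are stationary (indeed ergodic) and bounded, and they are independent. Weyl equidistribution gives $\ave(X)=\ave(Y)=0$ a.s., but
\[
X_nY_n=\tfrac12\cos\bigl(2\pi(2n\alpha+U+V)\bigr)+\tfrac12\cos\bigl(2\pi(U-V)\bigr),
\]
so $\ave(XY)=\tfrac12\cos\bigl(2\pi(U-V)\bigr)$, which is nonzero a.s. This is precisely the phenomenon you flagged: the product of two ergodic shifts need not be ergodic, and here the nontrivial joint invariant $\sigma$-field captures the relative phase $U-V$.

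The paper's own proof takes essentially the same route you outline (ergodic decomposition of each marginal and passage to products of components), and it slips at the same point: it asserts that each product component $\mu^{(1)}_z\otimes\mu^{(2)}_t$ is ergodic, which is not true in general. Your final remark is the correct salvage for the application in Section~\ref{sec:pscpCounter}: there the sequence $(\bs\epsilon(v))$ is i.i.d., hence weakly mixing, and the product of a weakly mixing system with any ergodic system is ergodic. Under that extra hypothesis on one of the two sequences, the paper's ergodic-decomposition argument (and your version of it) goes through verbatim, since each $\mu^{(1)}_z\otimes\mu^{(2)}_t$ is then genuinely ergodic. So the right statement to prove is the lemma with the additional assumption that one of the sequences is weakly mixing (or i.i.d.), and then both proofs are correct.
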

\begin{proof}
	\mar{later: is the claim known? Is there an easier proof? Are the explanations sufficient?}
	If the sequence $(X_n,Y_n)_n$ is ergodic, then the ergodic theorem implies that $\ave(X)=\omid{X_1}$, $\ave(Y)=\omid{Y_1}$  and $\ave(XY)=\omid{X_1Y_1}$ a.s. and the claim follows. 
	In general, let $\int \mu^{(1)}_z d\nu_1(z)$ and $\int \mu^{(2)}_z d\nu_2(z)$ be the ergodic decompositions of the distributions of $(X_n)_n$ and $(Y_n)_n$ respectively. Then $\int\int \mu^{(1)}_z\otimes\mu^{(2)}_t d\nu_1(z)d\nu_2(t)$ is a decomposition of the distribution of $(X_n,Y_n)_n$ and each component $\mu^{(1)}_z\otimes\mu^{(2)}_t$ is ergodic. So it is an ergodic decomposition. By the first part of the proof, the claim holds for each ergodic component. This implies the claim $\ave(XY)=\ave(X)\ave(Y)$ a.s. since it is a property of the samples of the processes.
\end{proof}

\begin{remark}
	The 
	existence of the spatial averages in~\eqref{eq:ave} is heavily based on the assumption of point-stationarity, which was shown to be impossible. Indeed,
	we guess that in every CP of $[\bs G_0,\bs o_0]$, $\ave(\varphi\circ\bs r; L_0)$ does not exist. A similar statement that can be proved now, is that Proposition~\ref{prop:pscp} and Lemma~\ref{lem:equivCP} imply that $\ave(\bs r; \bs G_0)$ does not exist (otherwise, one would obtain an equivariant CP by a suitable scaling). Also, $\ave(\varphi\circ(c\bs r); \bs G_0)$ cannot exist for all $c>0$.
\end{remark}

\begin{figure}[t]
	\centering
	\includegraphics[width=.9\textwidth]{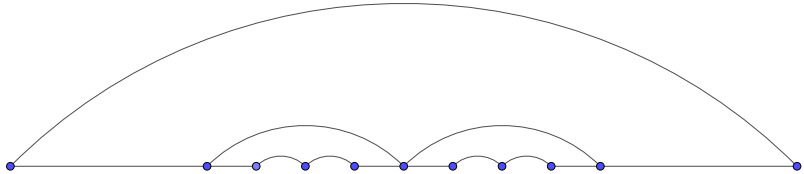}
	\caption{A pentagon in Remark~\ref{rem:pscpbounded} and its two offspring pentagons. In each pentagon, two edges can be drawn by line segments instead of semicircles.}
	\label{fig:pscp2}
\end{figure}

\begin{remark}
	\label{rem:pscpbounded}
	To construct an example with bounded degrees, one can modify $[\bs G, \bs o]$ similarly to Example~\ref{ex:sCPbounded}. For this, replace every triangle $t_{a,b}$ in the construction of $\bs G_0$ with a pentagon with vertex set $\{(\frac{ka+(4-k)b}{4},0): k=0,\ldots, 4 \}$. The potential offsprings of $t_{a,b}$ are $t_{(3a+b)/4, (a+b)/2}$ and $t_{(a+b)/2,(a+3b)/4}$ (see Figure~\ref{fig:pscp2}). 
	By adding a new vertex inside each pentagon, one can obtain a triangulation of a half plane. The reader can verify that this graph can be obtained from the previous graph by splitting every vertex into finitely many vertices (with finite expectation) and adding new edges. Therefore, one can make it unimodular by biasing and changing the root similarly to Example~\ref{ex:sCPbounded}. Similarly to Proposition~\ref{prop:pscp}, one can prove that this triangulation does not have any point-stationary CP such that all circles (except those corresponding to the centers of the pentagons) are tangent to a common line (note that every pentagon $t_{a,b}$ has a \textit{middle vertex} $(\frac{a+b}{2},0)$, and so, the middle vertices of the pentagons are structured as an Eternal Galton-Watson tree). Finally, by reflecting this graph through the line $y=-\frac 12$ and adding new vertices on this line, one can obtain a whole-plane triangulation (similarly to Example~\ref{ex:sCPbounded}) and deduce that it does not have any point-stationary CP (similarly to the proof of Theorem~\ref{thm:scp}).
\end{remark}

\section{An Approach By Finite Approximations}
\label{sec:anotherapproach}
In this section, to study whether a unimodular planar graph has a point-stationary CP or not, we will use approximations of the graph by finite planar graphs. A particular result is that the graph constructed in Section~\ref{sec:scpCounter} has no point-stationary CP as well (this cannot be proved by the method of Section~\ref{sec:pscpCounter}). In general, we are focused on the CP-parabolic case, since the other cases always have stationary CPs in the hyperbolic plane. It is worth to mention that CP-parabolic unimodular planar graphs can always be approximated by finite planar graphs \cite{AnHuNaRa18}; i.e., are \textit{sofic}.



Here, it is convenient to consider circle packings up to similarities of the plane. So we define: 

\begin{definition}
	Let $G$ be a planar graph and let $P=\{C_v\}_{v\in V(G)}$ be a CP of $G$. Let $r(v)$ be the radius of $C_v$ and $d(u,v)$ be the distance of the centers of $C_u$ and $C_v$. The \defstyle{CP-cocycle} of $P$ is the map $c:G\times G\to\mathbb R^2$ defined by $c(u,v):=\big(r(v)/r(u), d(u,v)/r(u)\big)$. 
	If $G$ is a triangulation, one can drop the second term and define $c(u,v):=r(v)/r(u)$.
	\\
	In addition, if $[\bs G, \bs o]$ is a unimodular planar graph and $\bs c$ is a random CP-cocycle of $\bs G$, then it is called an \defstyle{equivariant CP-cocycle} if by regarding $\bs c(u,v)$ as the mark of the pair $(u,v)$, a unimodular marked graph (i.e., a unimodular network) is obtained (see Remark~\ref{rem:equivprocess}). Convergence of a sequence $[\bs G_n, \bs o_n; \bs c_n]$, where $\bs c_n$ is a (random) CP-cocycle of $[\bs G_n, \bs o_n]$, can be defined using the notion convergence of marked graphs (see e.g., \cite{processes} and \cite{I}).
\end{definition}

Note the the CP-cocycle of $P$ uniquely determines $P$ up to similarities. The term \textit{cocycle} is chosen since the marking $c(u,v):=r(v)/r(u)$ satisfies $c(u,v)c(v,w)=c(u,w)$. 

	One can show that every point-stationary CP of $[\bs G, \bs o]$ determines an equivariant CP-cocycle of $[\bs G, \bs o]$, but the converse is not necessarily true (it is indeed true for finite graphs). See e.g., Examples~\ref{ex:Z-1} and~\ref{ex:Z-iid}. 

\begin{example}
	\label{ex:uniquecocycle}
	Assume $[\bs G, \bs o]$ is a unimodular one-ended triangulation. By uniqueness of a CP for $\bs G$ up to similarities, one gets that $\bs G$ has a unique CP-cocycle a.s. Since it is a deterministic (and measurable) function of $\bs G$ and does not depend on the root, it is an equivariant CP-cocycle.
\end{example}

\begin{definition}
	Let $P$ be a circle packing of a finite planar graph and $0<\epsilon< 1$. 
	If the radii of the circles are $r_1\leq\cdots\leq r_n$, let $q(P,\epsilon):=r_{\lceil \epsilon n\rceil}$ be the \textbf{$\epsilon$-quantile} of the radii, where $\lceil x\rceil=\min\{n\in\mathbb Z: n\geq x \}$. If $c$ is a CP-cocycle, $q(c,\epsilon)$ is not well defined, but the ratio $q(c,\epsilon)/q(c,\delta)$ is well defined.
\end{definition}

Note that the distribution of every unimodular finite graph is invariant under re-rooting to a new root chosen uniformly. The same holds for finite point-stationary point processes. Therefore, by Lemma~\ref{lem:equivCP}, every unimodular finite planar graph has a point-stationary CP, and hence, has an equivariant CP-cocycle.

\begin{theorem}
	\label{thm:tight}
	For $n\in\mathbb N$, let $[\bs G_n, \bs o_n; \bs c_n]$ be a unimodular finite planar graph equipped with an equivariant CP-cocycle. 
	Assume that this sequence converges weakly to a random marked graph $[\bs G, \bs o; \bs c]$. 
	If
	\begin{equation}
		\label{eq:quantiles}
		\forall \epsilon>0: \lim_{M\to\infty} \sup_n \myprob{\frac{q(\bs c_n,1-\epsilon)}{q(\bs c_n,\epsilon)}>M}= 0,
	\end{equation}
	then $[\bs G, \bs o]$ has a point-stationary circle packing such that its CP-cocycle is $\bs c$.
\end{theorem}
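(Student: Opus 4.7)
The plan is to construct a point-stationary CP of $[\bs G,\bs o]$ as a subsequential weak limit of suitably normalized versions of the finite point-stationary packings realizing the cocycles $\bs c_n$ (whose existence is noted in the paragraph preceding the theorem, together with Lemma~\ref{lem:equivCP}). The hypothesis~\eqref{eq:quantiles} will enter only to establish tightness of the scale at the root.

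First I would fix a root-independent normalization. In each realization of $\bs G_n$, pick any representative circle packing with radii $r_n(v)$ realizing the cocycle $\bs c_n$, and divide through by the median $M_n:=q(r_n,1/2)$; the resulting radii $\tilde r_n(v):=r_n(v)/M_n$ depend neither on the choice of representative nor on the root. Place this packing in $\mathbb R^2$ with the circle at $\bs o_n$ centered at the origin and a uniform random orientation, and call the result $\bs P_n$. Since $\bs o_n$ is a uniform vertex of $\bs G_n$, $\bs P_n$ is point-stationary.

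The key step is to show that $\tilde r_n(\bs o_n)$ is tight in $(0,\infty)$. The uniform root has its radius distributed (conditionally on $\bs G_n,\bs c_n$) as a uniform sample from the empirical distribution of radii, so
\[
\myprob{\tilde r_n(\bs o_n)\notin[q(\tilde r_n,\epsilon),q(\tilde r_n,1-\epsilon)]\,\big|\,\bs G_n,\bs c_n}\leq 2\epsilon.
\]
By normalization $q(\tilde r_n,1/2)=1$, and~\eqref{eq:quantiles} applied at level $\epsilon$ yields $M$ such that, with probability $\geq 1-\epsilon$ uniformly in $n$, $q(\tilde r_n,1-\epsilon)\leq M$ and hence also $q(\tilde r_n,\epsilon)\geq 1/M$. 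Combining, $\tilde r_n(\bs o_n)\in[1/M,M]$ with probability $\geq 1-3\epsilon$ uniformly in $n$.

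With this tightness in hand, pass to a subsequence along which $[\bs G_n,\bs o_n;\bs c_n;\tilde r_n(\bs o_n)]$ converges weakly to some $[\bs G,\bs o;\bs c;R]$ with $R\in(0,\infty)$ almost surely. For each $v$ at bounded graph-distance from $\bs o_n$, $\tilde r_n(v)=\tilde r_n(\bs o_n)\bs c_n(\bs o_n,v)\to R\cdot\bs c(\bs o,v)$; since the centers are determined by the radii and the combinatorics through the tangency relations, they converge as well. The tangency and disjoint-interior conditions are closed, so the limit is a valid CP $\bs P$ of $\bs G$ with $r(\bs o)=R$, whose cocycle coincides with $\bs c$ by construction. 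Point-stationarity of each $\bs P_n$ is expressed by a mass-transport identity that passes to the limit under weak convergence provided no mass escapes to $0$ or $\infty$, which is exactly what the previous bound ensures. The main obstacle is the tightness step itself: translating~\eqref{eq:quantiles} into a uniform bound on the radius of the root circle, which relies crucially on $\bs o_n$ being a uniform sample so that its radius lies in any prescribed quantile band with controlled probability; everything else is a standard weak-compactness plus continuity argument.
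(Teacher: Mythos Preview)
Your approach is essentially the same as the paper's: build point-stationary finite packings, normalize so the scale does not see the root, use~\eqref{eq:quantiles} to get tightness of the radius at the root, and pass to a subsequential limit. The only structural difference is cosmetic: you normalize by the median $M_n=q(r_n,1/2)$ at the outset and then prove $\tilde r_n(\bs o_n)$ is tight, whereas the paper first normalizes by $\bs r_n(\bs o_n)$ (which makes convergence of the packing immediate from convergence of the cocycle), proves $\bs m_n/\bs r_n(\bs o_n)$ is tight, and only then re-normalizes by the median. Your tightness estimate, exploiting $q(\tilde r_n,1/2)=1$ to turn the ratio bound into two-sided bounds on the quantiles, is in fact slightly cleaner than the paper's separate upper and lower estimates.

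There is one genuine omission. You assert that ``the limit is a valid CP $\bs P$ of $\bs G$'' and that point-stationarity passes to the limit, but you do not check that the limiting collection of circles is locally finite in $\mathbb R^2$. Disjointness of interiors and the tangency relations are indeed closed conditions, but they do not by themselves rule out accumulation of circles (think of a path graph packed with radii $2^{-|n|}$). Without local finiteness, the centers do not form a point process and point-stationarity is not even defined. The paper handles this explicitly at the end of its proof with a short mass-transport argument: if infinitely many circles lie in some ball of radius $\lambda\bs r(\bs o)$ with positive probability, send unit mass from $v$ to each $w$ with $\bs r(w)<\bs r(v)$ and center within $\lambda\bs r(v)$; the outgoing mass is infinite but the incoming mass is bounded by $(\lambda+1)^2$, contradicting the mass transport principle. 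You should include this step (or an equivalent one).
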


\begin{proof}
	Let $\bs P_n$ be a point-stationary CP of $[\bs G_n, \bs o_n]$ such that its CP-cocycle is $\bs c_n$. By applying a random isometry, one can assume the distribution of $\bs P_n$ is invariant under the isometries that fix the origin. Let $\bs r_n(\cdot)$ be the radii of the circles in $\bs P_n$.
	It can be seen the $\frac 1{\bs r_n(\bs o_n)}\bs P_n$ converges to a random circle packing of $[\bs G, \bs o]$ such that its CP-cocycle is $\bs c$. Here, $\bs P$ is regarded as a random function that assigns disjoint circles to the vertices of $[\bs G, \bs o]$. At the end of the proof, it will be shown that the set of circles in $\bs P$ does not have a limit point in the plane, and hence, $\bs P$ is an actual circle packing in the plane.
	 	
	Let $\bs q_n(\epsilon):=q(\bs P_n,\epsilon)$ and $\bs m_n:=q(\bs P_n,\frac 1 2)$, which are random variables. 
	Since the distribution of $[\bs G_n, \bs o_n]$ is invariant under random re-rooting, one has $\myprob{\bs r_n(\bs o_n)<\bs q_n(\epsilon)}\leq \epsilon$ and $\myprob{\bs r_n(\bs o_n)>\bs q_n(\epsilon)}\leq 1-\epsilon$.
	We claim that the sequence $\bs m_n/\bs r_n(\bs o_n)$ is tight in the open interval $(0,\infty)$. For every $\epsilon$ and $M$, one has
	\[
	\myprob{\frac{\bs m_n}{\bs r_n(\bs o_n)}>M}\leq \myprob{\bs r_n(\bs o_n)<\bs q_n(\epsilon)} + \myprob{\frac{\bs m_n}{\bs q_n(\epsilon)}>M}.
	\]
	Let $\delta>0$ be arbitrary.
	For $\epsilon:=\delta/2$, the first term in RHS is at most $\delta/2$. Also, by~\eqref{eq:quantiles}, one can choose $M$ large enough such that the second term in the RHS is less than $\delta/2$ for all $n$. So $\myprob{\bs m_n/\bs r_n(\bs o_n)>M}<\delta$ for all $n$. Similarly,
	\[
	\myprob{\frac{\bs m_n}{\bs r_n(\bs o_n)}<\frac 1 M}\leq \myprob{\bs r_n(\bs o_n)>\bs q_n(1-\epsilon)} +
	\myprob{\frac{\bs m_n}{\bs q_n(1-\epsilon)}>\frac 1{M}}
	\]
	and \mar{later: symbol for weak convergence} one can make the RHS arbitrarily small uniformly in $n$. So it is proved that $\bs m_n/\bs r_n(\bs o_n)$ is a tight sequence in $(0,\infty)$. Since the sequence $\frac 1{\bs r_n(\bs o_n)}\bs P_n$ is convergent, it is tight. So the sequence $(\frac 1{\bs r_n(\bs o_n)}\bs m_n,\frac 1{\bs r_n(\bs o_n)}\bs P_n)$ is also tight. Therefore, by passing to a subsequence if necessary, one can assume that the latter is convergent weakly. This implies that there is a random variable $\bs Z$ such that $(\frac 1{\bs r_n(\bs o_n)}\bs m_n,\frac 1{\bs r_n(\bs o_n)}\bs P_n)\Rightarrow (\bs Z, \bs P)$. In addition, by tightness in $(0,\infty)$, one has $0<\bs Z<\infty$ a.s. This gives that $\bs P'_n:=\frac 1{\bs m_n}\bs P_n\Rightarrow \frac 1{\bs Z}\bs P$. Since $\bs m_n$ does not depend on the root, $\bs P'_n$ is a point-stationary CP. This implies that $\bs P':=\frac 1{\bs Z}\bs P$ is also a point-stationary CP.
	
	It remains to prove that the set of circles in $\bs P'$ does not have a limit point a.s. Assume that this is not the case. So there exists $\lambda<\infty$ such that with positive probability, there are infinitely many circles in the ball with radius $\lambda \bs r(\bs o)$ centered at the origin, where $\bs r(\cdot)$ denotes the radii of the balls in $\bs P'$. Send unit mass from vertex $v$ to vertex $w$ if $\bs r(w)<\bs r(v)$ and the distance of the corresponding centers is at most $\lambda\bs r(v)$. Then the out-going mass is infinity with positive probability, while the incoming mass has a deterministic upper bound $(\lambda+1)^2$. This contradicts the mass transport principle for $[\bs G, \bs o; \bs r]$. So the claim is proved.
\end{proof}

\begin{conjecture}
	The converse of Theorem~\ref{thm:tight} also holds in the sense that if $[\bs G,\bs o]$ has a point-stationary CP such that its CP-cocycle is $\bs c$, then~\eqref{eq:quantiles} holds along some subsequence.
\end{conjecture}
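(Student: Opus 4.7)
Let $\bs P$ denote the point-stationary circle packing of $[\bs G,\bs o]$ whose CP-cocycle is $\bs c$, and write $\bs r(v)$ for the radius of the circle at $v$ in $\bs P$. Because $\bs r(\bs o)$ is almost surely positive and finite, its law $F$ is a proper Borel probability measure on $(0,\infty)$, so the quantiles $F^{-1}(\epsilon)$ and $F^{-1}(1-\epsilon)$ lie strictly between $0$ and $\infty$ for every $\epsilon\in(0,1)$. The strategy is to build finite approximations whose empirical distribution of radii converges, after an appropriate graph-measurable rescaling, to $F$; tightness of the quantile ratios then follows immediately from the corresponding finite ratios for $F$.

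\textbf{Step 1 (Approximations carrying the cocycle).} Since $[\bs G,\bs o]$ is CP-parabolic, it is sofic by~\cite{AnHuNaRa18}, so one can choose finite unimodular planar triangulations $[\bs G_n,\bs o_n]\Rightarrow [\bs G,\bs o]$. Each $\bs G_n$ carries a unique equivariant CP-cocycle $\bs c_n$ (the finite analogue of Example~\ref{ex:uniquecocycle}). A local-rigidity argument for circle packings under Benjamini--Schramm convergence, in the spirit of the one used in~\cite{AnHuNaRa18} to transfer CP-parabolicity between finite approximants and their limit, then yields, possibly after passing to a subsequence, $[\bs G_n,\bs o_n;\bs c_n]\Rightarrow [\bs G,\bs o;\bs c]$.

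\textbf{Step 2 (Choice of scale and a convergent radius mark).} For each $n$, Lemma~\ref{lem:equivCP} applied to $\bs G_n$ gives a point-stationary CP realising $\bs c_n$; concretely, choosing a graph-measurable scale $\bs s_n=\bs s_n(\bs G_n)$ determines a radius function $\bs r_n$ on $V(\bs G_n)$. Pick $\bs s_n$ so that the empirical median of $\{\bs r_n(v):v\in V(\bs G_n)\}$ equals the median of $F$. Along a further subsequence this yields the marked convergence $[\bs G_n,\bs o_n;\bs c_n,\bs r_n]\Rightarrow [\bs G,\bs o;\bs c,\bs r]$: the cocycle part is inherited from Step~1, and pinning the median removes the remaining one-parameter scaling ambiguity between $\bs r_n$ and $\bs r$.

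\textbf{Step 3 (From marked convergence to quantile tightness).} For each realisation of $\bs G_n$ the conditional law of $\bs r_n(\bs o_n)$ given $\bs G_n$ is exactly the empirical distribution $\mu_n$ of $\{\bs r_n(v):v\in V(\bs G_n)\}$, so the quantiles of $\mu_n$ are precisely $q(\bs c_n,\epsilon)$ in the chosen scale. Marked convergence forces the unconditional law of $\bs r_n(\bs o_n)$ to converge weakly to $F$, i.e.\ $\omid{\mu_n}\to F$ weakly on $(0,\infty)$, and a Markov-inequality argument then shows that the random measures $\mu_n$ themselves form a tight family in the space of probability measures on $(0,\infty)$. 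Tightness of $\mu_n$ on $(0,\infty)$ forces $(q(\bs c_n,\epsilon),q(\bs c_n,1-\epsilon))$ to be tight inside $(0,\infty)^2$, and hence $q(\bs c_n,1-\epsilon)/q(\bs c_n,\epsilon)$ is tight in $(0,\infty)$, which is exactly~\eqref{eq:quantiles} along the chosen subsequence.

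\textbf{Expected main obstacle.} The delicate step is Step~1: proving that the circle-packing construction is sufficiently continuous under Benjamini--Schramm convergence of one-ended triangulations to guarantee $\bs c_n\Rightarrow\bs c$. This is a global-to-local rigidity statement and is the part that would most rely on---and plausibly require extending---the machinery of~\cite{AnHuNaRa18}; by comparison Step~2 is an essentially one-dimensional normalisation, and Step~3 is a standard consequence of weak convergence of marks.
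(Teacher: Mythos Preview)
The statement you are trying to prove is labelled a \emph{Conjecture} in the paper; the paper offers no proof of it, and in fact immediately says ``We can prove a weaker statement as follows (Theorem~\ref{thm:quantiles}).'' So there is no proof in the paper to compare your attempt against---the author regards this as open.

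On the merits of your proposal, there are genuine gaps beyond the one you flag.

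\textbf{Step 1 rests on a false premise.} You assert that each finite triangulation $\bs G_n$ ``carries a unique equivariant CP-cocycle $\bs c_n$ (the finite analogue of Example~\ref{ex:uniquecocycle}).'' There is no such analogue. The circle packing of a finite planar triangulation is unique only up to M\"obius transformations, a three-parameter group, not up to similarities of the plane; consequently a finite triangulation has a three-parameter family of CP-cocycles, not a single canonical one. Example~\ref{ex:uniquecocycle} crucially uses that the triangulation is one-ended and that its carrier is the whole plane, which forces the M\"obius freedom down to similarities. So before you can even ask whether $\bs c_n\Rightarrow\bs c$, you must choose $\bs c_n$, and nothing in your argument tells you how to do that equivariantly. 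The continuity/rigidity claim you identify as the ``main obstacle'' is therefore not the only missing ingredient; the very object $\bs c_n$ is not yet defined.

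\textbf{Step 2 does not follow from Step 1.} Even granting $[\bs G_n,\bs o_n;\bs c_n]\Rightarrow[\bs G,\bs o;\bs c]$, matching the empirical median of $\bs r_n$ to the median of $F$ does not give $[\bs G_n,\bs o_n;\bs r_n]\Rightarrow[\bs G,\bs o;\bs r]$. Convergence of cocycles controls ratios $\bs r_n(v)/\bs r_n(\bs o_n)$ locally near the root, but the empirical median is a global functional of $\bs G_n$ and there is no a priori reason it stays comparable to $\bs r_n(\bs o_n)$ as $n\to\infty$. The sentence ``pinning the median removes the remaining one-parameter scaling ambiguity'' is exactly the content of the conjecture, not a step towards it.

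Your Step~3, by contrast, is essentially the Markov-inequality computation already carried out in the proof of Theorem~\ref{thm:quantiles}; if Steps~1 and~2 could be made to work, Step~3 would indeed be routine. But as it stands the proposal is a heuristic outline rather than a proof, and the paper's decision to leave the statement as a conjecture reflects that the missing pieces are substantial.
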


We can prove a weaker statement as follows (Theorem~\ref{thm:quantiles}). First, the following definition is borrowed from~\cite{II}.
\begin{definition}
	\label{def:embedded}
	Let $[\bs G, \bs o]$ be a unimodular graph. An \defstyle{equivariantly embedded subgraph} of $[\bs G, \bs o]$ is a random subgraph $\bs H$ such that $\bs o\in V(\bs H)$ a.s. and 
	\begin{equation}
		\label{eq:embedded}
		\forall g: \omid{\sum_{v\in V(\bs H)} g(\bs G, \bs H, \bs o, v)} = \omid{\sum_{v\in V(\bs H)} g(\bs G, \bs H, v, \bs o)},
	\end{equation}
	where the conditions on $g$ are similarly to~\eqref{eq:mtp}. In addition, we always assume that $\bs H$ is the induced subgraph on its vertex set.
\end{definition}
One can deduce from~\eqref{eq:embedded} that $[\bs H, \bs o]$ is also a unimodular graph. See Examples~\ref{ex:Z-1}, \ref{ex:Z-iid} and the proof of Proposition~\ref{prop:scpStronger} for examples of this definition, together with applications of Theorem~\ref{thm:quantiles}.

\begin{theorem}
	\label{thm:quantiles}
	Let $[\bs G, \bs o]$ be a unimodular planar graph and $\bs G_n$ be an equivariantly embedded subgraph which is finite a.s. ($n=1,2,\ldots$). Assume $\bs c$ is an equivariant CP-cocycle of $[\bs G, \bs o]$ and let $\bs c_n$ be the restriction of $\bs c$ to $\bs G_n$. 
	If~\eqref{eq:quantiles} does not hold for this sequence, then $[\bs G, \bs o]$ has no point-stationary CP such that its CP-cocycle is $\bs c$.
\end{theorem}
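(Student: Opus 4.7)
The plan is to prove the contrapositive: if $[\bs G,\bs o]$ admits a point-stationary CP $\bs P$ whose CP-cocycle is $\bs c$, then~\eqref{eq:quantiles} holds. Write $\bs R(v)$ for the radius of the circle in $\bs P$ at $v$, and let $\bs Q_n(\epsilon)$ denote the $\epsilon$-quantile of $\{\bs R(v):v\in V(\bs G_n)\}$; then the scale-invariant ratio $q(\bs c_n,1-\epsilon)/q(\bs c_n,\epsilon)$ coincides with $\bs Q_n(1-\epsilon)/\bs Q_n(\epsilon)$. Point-stationarity of $\bs P$ gives unimodularity of $[\bs G,\bs o;\bs R]$, and since $\bs c$ is a deterministic function of $\bs R$, the marked graph $[\bs G,\bs o;\bs c,\bs R]$ is unimodular too. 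The first step is to observe that $\bs G_n$ remains equivariantly embedded in this enriched marking: the identity~\eqref{eq:embedded} for test functions $g$ that also depend on $\bs R$ follows from the mass transport principle on the joint unimodular structure $[\bs G,\bs o;\bs c,\bs R,\bs G_n]$, restricted to pairs $(u,v)\in V(\bs G_n)^2$, using that $\bs o\in V(\bs G_n)$ a.s.

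Next, for any fixed $0<a<b<\infty$, I would apply~\eqref{eq:embedded} with the nonnegative test function
\[
g(\bs G,\bs c,\bs R,\bs G_n,u,v):=\frac{\identity{\bs R(v)\notin[a,b]}}{\card{V(\bs G_n)}}.
\]
The right-hand side evaluates to $\myprob{\bs R(\bs o)\notin[a,b]}$, while the left-hand side equals $\omid{\bs p_n^{[a,b]}}$ for
\[
\bs p_n^{[a,b]}:=\frac{\card{\{v\in V(\bs G_n):\bs R(v)\notin[a,b]\}}}{\card{V(\bs G_n)}}.
\]
Hence $\omid{\bs p_n^{[a,b]}}=\myprob{\bs R(\bs o)\notin[a,b]}$ for every $n$, independently of any growth of $\card{V(\bs G_n)}$.

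Given $\eta>0$, the a.s. positivity and finiteness of $\bs R(\bs o)$ let me pick $0<a<b<\infty$ with $\myprob{\bs R(\bs o)\in[a,b]}\geq 1-\eta^2$; then $\omid{\bs p_n^{[a,b]}}\leq\eta^2$, and Markov's inequality gives $\myprob{\bs p_n^{[a,b]}>\eta}\leq\eta$ uniformly in $n$. On the complementary event, fewer than $\eta\,\card{V(\bs G_n)}$ radii in $V(\bs G_n)$ lie below $a$ (and similarly above $b$); since $\lceil\epsilon\,\card{V(\bs G_n)}\rceil>\eta\,\card{V(\bs G_n)}$ whenever $\epsilon>\eta$, this forces $\bs Q_n(\epsilon)\geq a$ and $\bs Q_n(1-\epsilon)\leq b$, so that $\bs Q_n(1-\epsilon)/\bs Q_n(\epsilon)\leq b/a$ on an event of probability at least $1-\eta$, uniformly in $n$.

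Thus, given $\epsilon>0$ and $\tau>0$, choosing $\eta:=\min(\tau,\epsilon/2)$ and the associated $a,b$ yields $M:=b/a$ with $\sup_n\myprob{q(\bs c_n,1-\epsilon)/q(\bs c_n,\epsilon)>M}\leq\tau$; letting $\tau\to 0$ establishes~\eqref{eq:quantiles} and thereby the desired contradiction. The main obstacle is the first step: the test function above depends on the radii $\bs R$, which carry the extra scalar $\bs R(\bs o)$ not present in the markings $(\bs G,\bs c)$ that define $\bs G_n$, so one must carefully verify that~\eqref{eq:embedded} lifts to this enlarged marking. Once this is clean, the rest is a routine Markov-plus-tightness computation.
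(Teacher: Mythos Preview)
Your proof is correct and follows essentially the same route as the paper's. Both arguments prove the contrapositive by using the embedded-subgraph mass transport principle to identify the expected empirical fraction of radii in $\bs G_n$ outside a fixed range with $\myprob{\bs R(\bs o)\notin[a,b]}$ (the paper phrases this as the root being exchangeable with a uniform vertex of $\bs G_n$, so that $\myprob{\bs r(\bs o)\leq 1/M}=\omid{\bs q_n^{-1}(1/M)}$), and then apply Markov's inequality to control the quantiles uniformly in $n$. The only cosmetic difference is that the paper treats the lower and upper tails separately with thresholds $1/M$ and $M$, whereas you bundle them into a single interval $[a,b]$; the delicate point you flag---that~\eqref{eq:embedded} must be lifted to include the radius marking $\bs R$---is exactly the step the paper also uses implicitly when it asserts that $[\bs G_n,\bs o'_n;\bs r_n]\stackrel{d}{=}[\bs G_n,\bs o;\bs r_n]$.
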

\begin{proof}
	Assume $\bs P$ is a point-stationary CP of $[\bs G, \bs o]$ such that its CP-cocycle is $c$. Let $\bs P_n$ be the restriction of $\bs P$ to $\bs G_n$. By~\eqref{eq:embedded}, it can be seen that $\bs P_n$ is a point-stationary CP of $[\bs G_n, \bs o]$ such that its CP-cocycle is $\bs c_n$. 
	Let $\bs r_n(\cdot)$ (resp. $\bs r$) be the radii corresponding to $\bs P_n$ (resp. $\bs P$).
	Let $\bs o'_n$ be a random vertex of $\bs G_n$ chosen uniformly. By unimodularity, $[\bs G_n, \bs o'_n; \bs r_n]$ has the same distribution as $[\bs G_n, \bs o; \bs r_n]$. Now, let $\epsilon,\delta>0$ be arbitrary.
	Choose $M$ large enough such that $\myprob{\bs r(\bs o)\leq 1/M}<\delta$. 
	Since $\bs r_n(\bs o)=\bs r(\bs o)$, one gets
	\begin{eqnarray*}
		\delta &>&\myprob{\bs r(\bs o)\leq \frac 1M} = \myprob{\bs r_n(\bs o)\leq \frac 1 M} = \myprob{\bs r_n(\bs o')\leq \frac 1M}\\
		&=& \omid{\probCond{\bs r_n(\bs o')\leq \frac 1M}{[\bs G_n; \bs r_n]}}\\
		&= & \omid{\bs q_n^{-1}(\frac 1M)},
	\end{eqnarray*}
	where $\bs q_n^{-1}(x)=\sup\{\epsilon: \bs q_n(\epsilon)\leq x \}$.
	Therefore, Markov's inequality gives
	\[
		\myprob{\bs q_n(\epsilon)\leq\frac 1M} \leq \myprob{\bs q_n^{-1}(\frac 1M)\geq \epsilon} \leq \frac{\delta}{\epsilon}.
	\]
	Also, assume $M$ is so large that $\myprob{\bs r(\bs o)>M}<\delta$. Similarly, one gets
	\begin{eqnarray*}
		\delta &>&\myprob{\bs r(\bs o)>M} = \myprob{\bs r_n(\bs o)>M} = \myprob{\bs r_n(\bs o')>M}\\
		&=& \omid{\probCond{\bs r_n(\bs o')>M}{[\bs G_n; \bs r_n]}}\\
		&=& \omid{1-\bs q_n^{-1}(M)}.
	\end{eqnarray*}
	Therefore, Markov's inequality gives
	\[
		\myprob{\bs q_n(1-\epsilon)> M} \leq \myprob{\bs q_n^{-1}(M)\leq 1-\epsilon} = \myprob{1-\bs q_n^{-1}(M)>\epsilon} \leq \frac{\delta}{\epsilon}.
	\]
	By combining the two inequalities, one gets $\myprob{\bs q_n(1-\epsilon)/\bs q_n(\epsilon)>M^2}\leq 2\delta/\epsilon$. This implies~\eqref{eq:quantiles} and the claim is proved.
%
\end{proof}

\begin{remark}
	\label{rem:quantiles-general}
	Note that $\bs G_n$ does not need to converge to $\bs G$ in Theorem~\ref{thm:quantiles}. Also, the proposition can be generalized to the following cases with the same proof:
	\begin{enumerate}[(i)]
		\item It is not needed that $\bs G_n$ is a connected subgraph. In general, the restriction of every CP of $[\bs G, \bs o]$ to $\bs G_n$, is a CP of $\bs G_n$. See e.g., Example~\ref{ex:Z-iid}.
		\item Even $[\bs G, \bs o]$ need not be connected. For this, one can assume that $[\bs G, \bs o]$ is a \textit{unimodular discrete metric space} \cite{I} equipped with an equivariant graph structure. This will be used in the proof of Proposition~\ref{prop:scpStronger}.
		\item In fact, $[\bs G, \bs o]$ need not be unimodular. It is only required that~\eqref{eq:embedded} holds for all of the subgraphs $\bs G_n$. Also, point-stationarity of a CP should be replaced by a similar formula to~\eqref{eq:embedded}. See e.g., the proof of Proposition~\ref{prop:scpStronger}, where conditioning breaks unimodularity.
	\end{enumerate}
\end{remark}

The following are basic examples for illustrations of the above result. The main application is in the proof of Proposition~\ref{prop:scpStronger}.

\begin{example}
	\label{ex:Z-1}
	Let $[\bs G, \bs o]:=[\mathbb Z, 0]$ and $\bs c(n,m):=2^{m-n}$. Then $\bs c$ is a CP-cocycle, but it is not the CP-cycle of any point-stationary CP of $[\bs Z, 0]$. To see this, let $\bs G_n$ be the subgraph induced by $\{0,1,\ldots,n\}-\bs U$, where $\bs U\in\{0,1,\ldots,n\}$ is uniformly at random. Observe that $q(\bs c_n, 3/4)/q(\bs c_n, 1/4)\approx 2^{n/2}$ and use Theorem~\ref{thm:quantiles}.
\end{example}

\begin{example}
	\label{ex:Z-iid}
	Let $[\bs G, \bs o]:=[\mathbb Z, 0]$ and $\bs c(n,n+1):=2^{\pm 1}$, each with probability $1/2$ independently for all $n\in\mathbb Z$. This can be extended to a CP-cocycle, but it is not the CP-cycle of any point-stationary CP of $[\bs Z, 0]$. To see this, let $\bs z_n:=\pm n$, each case with probability $1/2$, and $\bs G_n:=\{0, \bs z_n\}$. Since $\bs G_n$ has two elements, one gets $q(\bs c_n,3/4)/q(\bs c_n,1/4)= \max\{\bs c_n(0,\bs z_n), 1/ \bs c_n(0,\bs z_n) \}$. By the central limit theorem, $\frac 1{\sqrt n}\log\bs c(0,\bs z_n)$ converges to a non-degenerate normal distribution. This implies that~\eqref{eq:quantiles} does not hold and the claim is implied by Theorem~\ref{thm:quantiles}.
\end{example}

\begin{proposition}
	\label{prop:scpStronger}
	The unimodular graph $[\bs G, \bs o]$ of Example~\ref{ex:sCP} does not have any point-stationary circle packing.
\end{proposition}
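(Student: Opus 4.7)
The plan is to apply Theorem~\ref{thm:quantiles}. Since $\bs G$ is a one-ended triangulation, Example~\ref{ex:uniquecocycle} gives a unique equivariant CP-cocycle $\bs c$, so it suffices to exhibit a sequence of finite equivariantly embedded subgraphs $\bs G_n$ for which~\eqref{eq:quantiles} fails, in analogy with Example~\ref{ex:Z-iid}.

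The preliminary observation is that the distribution of $\bs G$ as an unrooted random graph is invariant under the horizontal translation $v\mapsto v+(1,0)$: this reduces to the fact that the induced map $(a_n)_n\mapsto(a_n+1\bmod 2^n)_n$ on the parameter chain is just addition by $1$ in the $2$-adic group $\mathbb Z_2$, which preserves Haar measure (equivalently, i.i.d.\ uniform $(u_i)_i\in\{0,1\}^{\mathbb N}$ is mapped to i.i.d.\ uniform). Using this, and noting that each of the three orbits $(0,0),(0,-1),(1/2,-1/2)$ of the root is preserved by integer horizontal translations, I would set
\[
\bs G_n:=\{\bs o,\bs o+(\bs z_n,0)\},\qquad \bs z_n\sim\mathrm{Unif}\{-n,+n\},
\]
independent of everything else. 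The equivariance condition~\eqref{eq:embedded} for this two-point subset follows by combining the translation invariance of $\bs G$ with $\bs z_n\stackrel{d}{=}-\bs z_n$.

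On such $\bs G_n$ one has $q(\bs c_n,3/4)/q(\bs c_n,1/4)=\max\{\bs c(\bs o,\bs o+(\bs z_n,0)),\bs c(\bs o,\bs o+(\bs z_n,0))^{-1}\}$, and so the proof reduces to showing that $|\log\bs c(\bs o,\bs o+(n,0))|\to\infty$ in probability as $n\to\infty$; this is the main obstacle. Here one would study the log-radii $\rho_m:=\log\bs r(\bs o+(m,0))$ in the normalized CP. By Lemma~\ref{lem:automorphism}, the unique non-trivial automorphism of $\bs G$ is implemented by a Euclidean involution of the CP; since this involution fixes the circles at the middle-strip vertices, it must be a line reflection, and that fixed line then forces every circle at a vertex of the form $(m,0)$ to be tangent to it. This yields the clean identity $r_a r_b=(x_b-x_a)^2/4$ for every edge between two such vertices, and analogous identities at each higher level. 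Propagating these constraints along the self-similar tree of arc levels (driven by the i.i.d.\ sequence $(u_i)$) should yield a random-walk-type estimate showing $\mathrm{Var}(\rho_n-\rho_0)\to\infty$. Granting this estimate, \eqref{eq:quantiles} fails and Theorem~\ref{thm:quantiles} precludes any point-stationary CP of $[\bs G,\bs o]$.
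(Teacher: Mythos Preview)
Your overall strategy matches the paper's: use the unique equivariant CP-cocycle (Example~\ref{ex:uniquecocycle}), exploit the reflection symmetry from Lemma~\ref{lem:automorphism} to force all axis circles tangent to a common line, and then violate~\eqref{eq:quantiles} to invoke Theorem~\ref{thm:quantiles}. The tangency identity you write down is also correct.

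However, there is a genuine gap: the entire analytic content of the proof is the sentence ``Propagating these constraints \ldots\ should yield a random-walk-type estimate showing $\mathrm{Var}(\rho_n-\rho_0)\to\infty$. Granting this estimate\ldots''. This is precisely the hard step, and you have not carried it out. The paper does it by passing to $\bs s(v):=1/\sqrt{\bs r(v)}$, for which the three-circles-tangent-to-a-line constraint becomes the additive recursion $\bs s(v)=\bs s(p(v))+\bs s(p'(v))$; encoding the pair $(\bs s(p'(v)),\bs s(p(v)))$ as a column vector then turns descent in the genealogy into left multiplication by $B=\begin{pmatrix}1&1\\0&1\end{pmatrix}$ or $B^t$, each with probability $1/2$ independently. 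The central limit theorem for products of i.i.d.\ random matrices (with strictly positive variance, which requires a separate citation) then gives the quantitative divergence of the log-radii needed to break~\eqref{eq:quantiles}. None of this is visible in your proposal.

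A second, related issue is your choice of subgraphs. You take the two-point set $\{\bs o,\bs o+(\pm n,0)\}$, mimicking Example~\ref{ex:Z-iid}. But the radii along the horizontal axis do \emph{not} form an i.i.d.\ random walk; the dependence is hierarchical, governed by the genealogy of the arcs. Two vertices at horizontal distance $n$ share a common ancestor at height only about $\log_2 n$, so even if the random-matrix analysis is carried out, the fluctuation of $\rho_n-\rho_0$ would be of order $\sqrt{\log n}$ rather than $\sqrt{n}$, and extracting it from two correlated descents is more delicate than what you sketch. The paper instead takes $\bs G'_n$ to be an entire generation of $2^n$ vertices below a height-$n$ ancestor; a uniformly chosen vertex of $\bs G'_n$ then corresponds exactly to a length-$n$ i.i.d.\ matrix product, which is what makes the CLT apply cleanly to the empirical quantiles. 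Your two-point reduction, while conceptually natural, does not line up with the structure that actually drives the estimate.
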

\begin{proof}
	According to Example~\ref{ex:sCP}, there exists a circle packing $\bs P=\{C_v\}_{v\in V(\bs G)}$ of $[\bs G, \bs o]$ in the plane such that $C_{\bs o}$ is centered at the origin and has radius 1, and the CP is symmetric w.r.t. the line $y=-1$. Let $\bs c$ be the CP-cocycle of $\bs P$. By Example~\ref{ex:uniquecocycle}, it is an equivariant cocycle.
	
	Condition on the event $A:=\{a_0=1, \bs o=0\}$ and let $\bs G'$ be the subset consisting of the vertices $(2m,0)$. It can be seen that~\eqref{eq:embedded} holds for $[\bs G, \bs o]$ (conditioned on $A$) and the subset $\bs G'$. Here, $\bs G'$ does not have any edges, but one can regard it as a discrete metric space (see Remark~\ref{rem:quantiles-general}). By~\eqref{eq:embedded}, one can show that $[\bs G', 0]$ is a unimodular discrete space~\cite{I} (note however that $[\bs G, \bs o]$ conditioned on $A$ is not unimodular). Let $\bs P'$ (resp. $\bs c'$) be the restriction of $\bs P$ (resp. $\bs c$) to $\bs G'$. By~\eqref{eq:embedded}, it can be seen that $\bs c'$ is an equivariant CP-cocycle of $[\bs G', 0]$. If $[\bs G, \bs o]$ has a point-stationary CP, then, by~\eqref{eq:embedded}, $[\bs G',0]$ has a point-stationary CP such that its CP-cocycle is $\bs c'$. So it is enough to show that the latter is impossible. This will be shown by the generalization of Theorem~\ref{thm:quantiles} to unimodular discrete spaces (see Remark~\ref{rem:quantiles-general}). 
	
	From now on, everything is conditioned on $A$ without mentioning repeatedly.
	The vertices of the graph $\bs G_0$ (see Example~\ref{ex:sCP}) have a genealogical structure similarly to the example in Section~\ref{sec:pscpCounter}. Let $t(v)$ and $p(v)$ be as in the proof of Proposition~\ref{prop:pscp}. Also, let $p'(v)$ be the third vertex of $t(v)$. For $n\geq 1$, let $\bs G^{(n)}$ be the subgraph of $\bs G_0$ consisting of the vertices that are below (or on) the triangle $t(p^{n}(0))$. A moment of thought shows that $\bs G^{(n)}$ is a deterministic graph \mar{later: figure + top circles + p(v), p'(v), offsprings, circles} (the structure of the triangles in $\bs G^{(n)}$ is a binary tree of depth $n$ in which the vertices at each level have an order, see Figure~\ref{fig:pscp3}). Also, by the construction of $\bs G_0$, it is straightforward to see that the vertex 0 is a uniform vertex of the last level of this graph. More precisely, if $\bs G'_n:=\bs G'\cap V(\bs G^{(n)})$ denotes the last level in $\bs G^{(n)}$, then $\bs G'_n$ is equivariantly embedded in $[\bs G_0, 0]$ (Definition~\ref{def:embedded}). Let $\bs P^{(n)}$ and $\bs c^{(n)}$ (resp. $\bs P'_n$ and $\bs c'_n$) be the restrictions of $\bs P$ and $\bs c$ to $\bs G^{(n)}$ (resp. $\bs G'_n$). See Figure~\ref{fig:pscp3}.
		\begin{figure}[t]
		\centering
		\includegraphics[width=.6\textwidth]{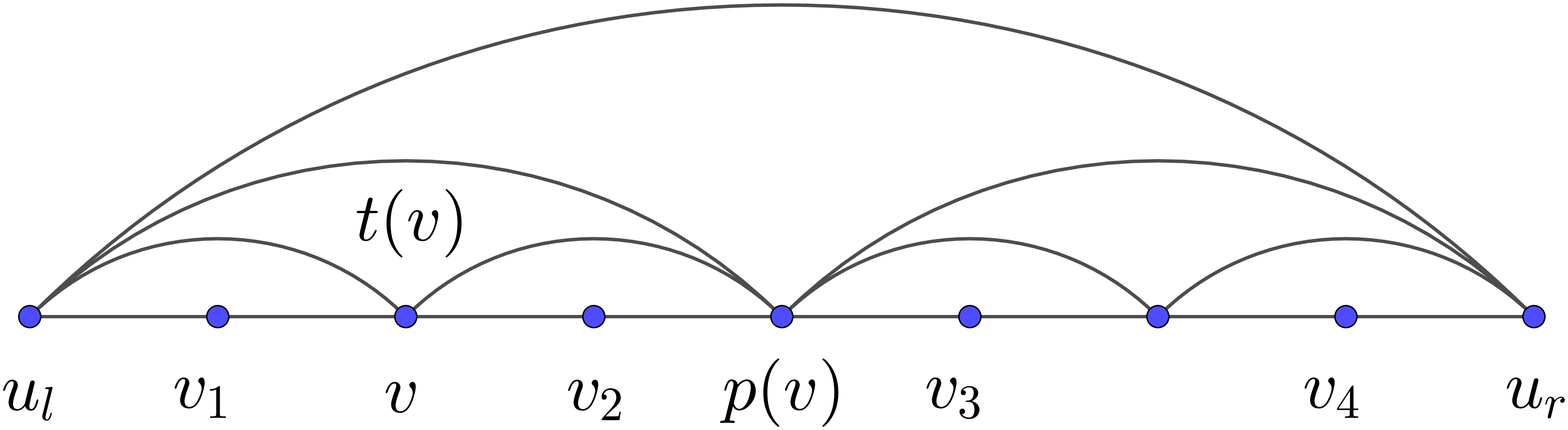}
		\includegraphics[width=.6\textwidth]{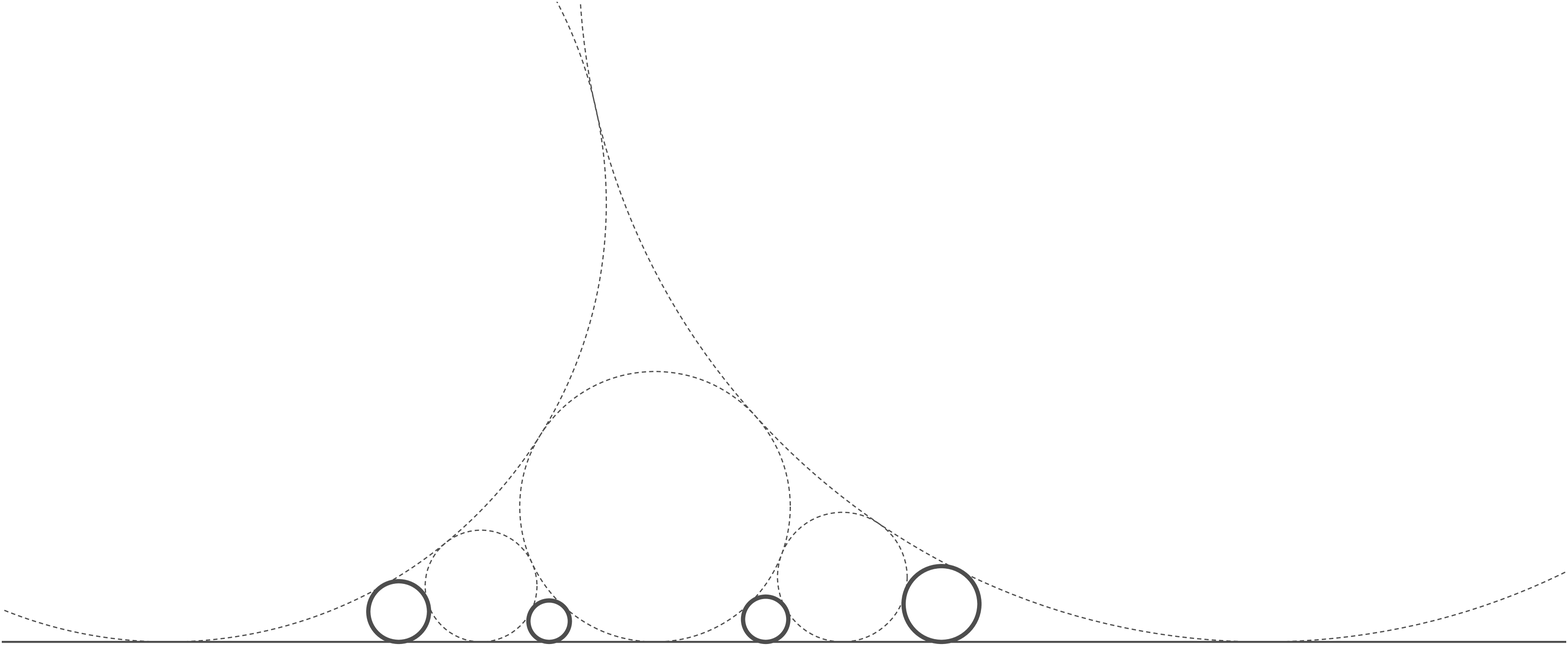}
		\caption{
			An instance of the graph $\bs G^{(2)}$ and the circle packing $\bs P^{(2)}$. For the vertex $v$, its parent $p(v)$ is shown, $p'(v)=u_l$ and its two offsprings are $v_1$ and $v_2$. Also, $\bs G'_2=\{v_1,v_2,v_3,v_4\}$. The circles in the circle packing $\bs P'_2$ are shown in bold.
			%
		}
		\label{fig:pscp3}
	\end{figure}
	
	Let $n$ be fixed.
	Note that the radii in the restriction of $\bs P$ to $\bs G^{(n)}$ are uniquely determined by the radii of the two top vertices $u_r:=p^{n+1}(0)$ and $u_l:=p'(p^{n}(0))$. This is because the circle corresponding to $v$ should be tangent to the common line and to the circles of $p(v)$ and $p'(v)$. 
	A straightforward calculation shows that if $\bs s(v):=1/\sqrt{\bs r(v)}$, where $\bs r(v)$ is the radius, then
	\begin{equation*}
		\bs s(v)=\bs s(p(v))+ \bs s(p'(v)).
	\end{equation*}
	If $p'(v)$ is at the left of $p(v)$, consider the column vector $\bs X(v):=[\bs s(p'(v)), \bs s(p(v))]^t$. Otherwise, let $\bs X(v):=[\bs s(p(v)), \bs s(p'(v))]^t$.
	The structure of the graph (see Figure~\ref{fig:pscp3}) and the above equation imply that the vectors corresponding to the offsprings of $v$ are $B \bs X(v)$ and $B^t \bs X(v)$, where $B:=\left[\begin{array}{cc} 1 & 1\\ 0& 1	\end{array} \right]$. Let $\bs o'$ be a uniform random vertex of $\bs G'_n$. It follows that conditioned on $\bs X(p^n(0))=x$, $\bs X(\bs o')$ has the same distribution as the product $B_n\cdots B_1 x$, where each $B_i$ is either $B$ or $B^t$, each with probability $1/2$, independently. Since $\bs s(\bs o')$ is the sum of the coordinates of $\bs X(\bs o')$, it has the same distribution as $[1\; 1] B_n\cdots B_1x$. Let $\bs Z:=[1\; 1] \bs X(p^n(0))= [1\; 1] x$. Therefore, by the central limit theorem for products of random matrices (Theorem~3 of~\cite{He97}), there exists $\gamma\in\mathbb R$ and $\sigma\geq 0$ such that the distribution of the random variable
	\[
		\frac 1{\sqrt n} \left[\log \frac {\bs s(\bs o')}{\bs Z}-n\gamma \right],
	\]
	conditioned on $\bs X(p^n(0))=x$, is approximately normal with zero mean and variance $\sigma^2$ (when $n$ is large). In addition, the (Prokhorov) distance between the distribution of this random variable and the normal distribution has a uniform bound regardless of $x\in (\mathbb R^+)^2\setminus 0$ (and the bound tends to zero as $n\to \infty$). Moreover,
	Corollary~3 and Theorem~5 of~\cite{He97} imply that $\sigma>0$. 
	Let $q_{\bs s}(\epsilon)$ be the $\epsilon$-quantile of $\{\bs s(v): v\in\bs G'_n \}$. Since $\bs o'$ is uniform in $\bs G'_n$ (given $\bs G'_n$ and $\bs P'_n$), the above convergence implies that 
	\[
		\frac 1{\sqrt n} \left[\log \frac {q_{\bs s}(\epsilon)}{\bs Z}-n\gamma \right] \to \sigma q_{\epsilon},
	\]
	where $q_{\epsilon}$ is the $\epsilon$-quantile of the standard normal distribution (note that the left hand side is a random variable depending on $\bs P'_n$, but the above argument shows that the convergence holds for any realization of $\bs P'_n$; i.e., the convergence is in $L^{\infty}$). 
	By $\bs s=1/\sqrt{\bs r}$, one has $q_{\bs s}(\epsilon)\approx 1/\sqrt{q(\bs P'_n,1-\epsilon)}$. So
	\[
	\frac 1{\sqrt n} \left[-\frac 12 \log \frac {q(\bs P'_n, \epsilon)}{\bs Z^{-2	}}-n\gamma \right] \to \sigma q_{1-\epsilon}.
	\]
	Subtraction gives
	\[
		\frac 1{\sqrt n} \log \frac {q(\bs P'_n, 1-\epsilon)}{q(\bs P'_n,\epsilon)} \to 2\sigma \left(q_{1-\epsilon}-q_{\epsilon}\right).
	\]
	This implies that~\eqref{eq:quantiles} does not hold for $\bs c'_n$. Hence, Theorem~\ref{thm:quantiles} implies that $[\bs G',0]$ does not have any point-stationary CP such that its CP-cocycle is $\bs c'$. So, by the first part of the proof, the claim is proved.
\end{proof}

\section*{Acknowledgements}
This research was in part supported by a grant from IPM (No. 98490118). The author thanks Mir-Omid Haji-Mirsadeghi for the verification of the proof of Theorem~\ref{prop:pscp}. We also thank Meysam Nassiri and Hesameddin Rajabzadeh for fruitful discussions on the circle packing used in Proposition~\ref{prop:scpStronger}.

\bibliography{bib} 
\bibliographystyle{plain}

\end{document}